\newtheorem{satz}{Theorem}
\newtheorem{proposition}[satz]{Proposition}
\newtheorem{theorem}[satz]{Theorem}
\newtheorem{lemma}[satz]{Lemma}
\newtheorem{definition}[satz]{Definition}
\newtheorem{corollary}[satz]{Corollary}
\newtheorem{remark}[satz]{Remark}
\newtheorem{example}[satz]{Example}
\newtheorem{problem}[satz]{Problem}
\def\F{\mathbb {F}}
\def\E{\mathsf{E}}
\def\C{\mathbb{C}}
\def\d{\delta}
\def\o{\omega}
\def\({\big (}
\def\){\big )}
\def\g{\gamma}
\def\G{\Gamma}
\def\ls{\leqslant}
\def\dim{{\rm dim}}
\def\codim{{\rm codim}}
\def\le{\leqslant}
\def\ge{\geqslant}
\def\_phi{\varphi}
\def\eps{\varepsilon}
\def\Gr{{\mathbf G}}
\def\FF{\widehat}
\def\ov{\overline}
\def\Spec{{\rm Spec\,}}
\def\t{\tilde}
\def\Span{{\rm Span\,}}
\def\la{\lambda}
\def\D{\Delta}
\def\supp{\mathsf{supp}}
\def\C{\mathbb{C}}
\def\M{\mathsf{M}}
\def\B{{\mathcal B}}
\def\M{{\mathcal M}}
\newcommand{\bp}{\bigskip}
\author{I.D. Shkredov}
\title{
    On Fourier coefficients of sets with small doubling 
}
\date{}
\begin{document}
	\maketitle


\begin{center}
	Annotation.
\end{center}

{\it \small
    Let $A$ be a subset of a finite abelian group such that $A$ has a small difference set $A-A$ and the density of $A$ is small. 
    We prove that, 
    counter--intuitively, 
    the smallness (in terms of $|A-A|$) of the Fourier coefficients of $A$ guarantees that $A$ is correlated with a large Bohr set.  
    Our bounds on the size and the dimension of the resulting Bohr set are close to exact. 
}
\\

\section{Introduction}

Let $\Gr$ be a
finite 
abelian group and  $\FF{\Gr}$  its dual group.
For any function $f:\Gr \to \mathbb{C}$ and $\chi \in \FF{\Gr}$ define the Fourier transform of $f$ at $\chi$ by the formula 
\begin{equation}\label{f:Fourier_representations}
\FF{f} (\chi) = \sum_{g\in \Gr} f(g) \ov{\chi (g)} \,.
\end{equation}
This paper considers the case when the function $f$ has a special form, namely, $f$  the characteristic function of a set $A \subseteq \Gr$ 
and we want to study the quantity 
\begin{equation}\label{def:M(A)}
    \mathcal{M} (A) := \max_{\chi\neq \chi_0} |\FF{A} (\chi)| \,,
\end{equation}
where by $\chi_0$  we have denoted the principal character of $\FF{\Gr}$ and 
we use the same capital letter to denote a set $A\subseteq \Gr$ and   its characteristic function $A: \Gr \to \{0,1 \}$.
It is well--known that $\M(A)$ is directly related to the uniform distribution properties of the set $A$ (see, e.g., \cite{IK_book}, \cite{TV}). 
Moreover, we impose an additional 
property on the set $A$, namely, that it is a set with small {\it doubling}, i.e. we consider sets $A$ for which the ratio $|A-A|/|A|$ is small.  
Recall that given two sets $A,B\subseteq \Gr$ 
the {\it sumset} 
of $A$ and $B$ is defined as 
$$A+B:=\{a+b ~:~ a\in{A},\,b\in{B}\}\,.$$
In a similar way we define the {\it difference sets} $A-A$ and the {\it interated  sumsets}, e.g., $2A-A$ is $A+A-A$.
In terms of the difference sets $|A-A|:=K|A|$ it is easy to see that 
\begin{equation}\label{f:M^2_intr}
    \M^2(A) \ge (1-o(1)) \cdot \frac{|A|^2}{K} 
\end{equation}
and estimate \eqref{f:M^2_intr} gives us a simple lower bound for the quantity $\M(A)$. 
In general, bound \eqref{f:M^2_intr} is tight, but the situation changes dramatically  depending on the {\it density} $\d:=|A|/|\Gr|$ of $A$. 
For example, in \cite[Lemma 4.1]{GR_rectification} (also, see \cite{Schoen_2.4_pol} and \cite[Propostion 6.1]{sanders2012bogolyubov}) it was proved that 
\begin{equation}\label{f:GR_intr}
    \M^2 (A) \ge (1-o(1)) \cdot |A|^2 \,,
\end{equation}
provided $\d = \exp (-\Omega (K))$.  
Thus, if the density is 
exponentially 
small relative to $K$, then $\M(A)$ is close to its maximum value $|A|$.
On the other hand, it is easy to see we always have $\d \le K^{-1}$ and for a random set $A \subseteq \Gr$ one has $\d \gg K^{-1}$. 
Therefore, one of the reasonable  
questions here is the following.
Assume that $\d \sim K^{-d}$, where $d>1$, that is, the dependence of $\d$ on $K$ is polynomial.
What non--trivial properties do the Fourier coefficients of set $A$ have in this case?
Such a problem arises  naturally in connection with the famous Freiman $3k-4$ theorem in $\F_p$, see, e.g., \cite{Freiman_2.4_Fp}, \cite{LS_towards}, \cite{LS_2.6}. 
In particular, in \cite[Theorem 6]{LS_2.6} it was  proved that if $\d \ll K^{-3}$, then
\begin{equation}\label{f:kappa_0_intr}
    \M^2 (A) \ge \frac{|A|^2}{K} (1+\kappa_0) \,,
\end{equation}
where $\kappa_0 >0$ is an absolute constant. 
Thus, a non-trivial lower bound for $\M(A)$ exists in any abelian group, and hence the Fourier coefficients of sets with small doubling have some interesting properties.
Other results on the quantity $\M(A)$ and its relation to sets $A_x$ (see Section \ref{sec:def} below) were obtained in \cite{sh_uncertainty}.  
The main result of this paper (all required definitions can be found in Sections \ref{sec:def}, \ref{sec:general_case}) concerns an even broader regime $\d \ll K^{-2}$ and gives us a new structural property of sets with small doubling and small Fourier coefficients. 


\begin{theorem}
    Let $\Gr$ be a finite abelian group, $A \subseteq \Gr$ be a set, $|A|= \d |\Gr|$, 
    $|A-A|=K|A|$, and $1\le M \le K$ be a parameter. 
    Suppose that 
\begin{equation}\label{cond:M_large_Bohr_intr}
    100 K^2 \d \le 1 \,.
\end{equation}
    Then either there is $x\neq 0$ such that
\begin{equation}\label{f:M_large_Bohr_intr}
    |\FF{A} (x)|^2 > \frac{M|A|^2}{K} \,,
\end{equation}
    or for any $B\subseteq A$, $|B| \gg |A| =  \d |\Gr|$ there exists  a regular Bohr set 
    $\B_*$  
    and  $z\in \Gr$ such that 
\begin{equation}\label{cor:M_intersection_intr}
    |B\cap (\B_* + z)| \ge \frac{|\B_*|}{8M} \,, 
\end{equation}
    and 
\begin{equation}\label{cor:M_Bohr_dim_intr}
    \dim (\B_*) \ll M^2 \left( \log (\d^{-1} K) + \log^2 M \right) \,,
\end{equation}
    as well as 
\begin{equation}\label{cor:M_Bohr_size_intr}
 |\B_*| 
    \gg 
    |\Gr|  \cdot \exp (- O(\dim (\B_*) \log (M \dim (\B_*))) ) 
    \,. 
\end{equation}
\label{t:main_intr}
\end{theorem}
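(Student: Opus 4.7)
The plan is to prove the dichotomy by assuming the Fourier alternative fails and building the Bohr set. Assume $|\FF{A}(\chi)|^2 \le M|A|^2/K$ for every $\chi \neq 0$, and fix $B \subseteq A$ with $|B| \gg |A|$. Two consequences follow immediately: $|B - B| \le |A - A| = K|A| \ll K|B|$, so $B$ has doubling $O(K)$ and energy $E(B) \ge |B|^3/O(K)$; and writing $E(A) = |\Gr|^{-1} \sum_\chi |\FF{A}(\chi)|^4$ and plugging the Fourier hypothesis into the $\chi \neq 0$ terms gives $E(A) \le |A|^4/|\Gr| + M|A|^3/K \ll M|A|^3/K$, where the density hypothesis \eqref{cond:M_large_Bohr_intr} absorbs the first term; so $E(B) \le E(A) \asymp M|B|^3/K$.

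\noindent Let $\Lambda := \{\chi : |\FF{B}(\chi)| \ge |B|/(cM)\}$ for a small constant $c$; Parseval bounds $|\Lambda| \ll M^2/\delta$. I would apply Chang's spectral theorem --- in a form sensitive to the density $\delta K$ of $B - B$ (producing the $\log(\delta^{-1}K)$ contribution) and iterated dyadically over scales $\eta, \eta/2, \eta/4, \ldots$ (producing the $\log^2 M$ contribution) --- to place $\Lambda$ inside the bounded integer span of a set $\Gamma \subseteq \FF{\Gr}$ with $|\Gamma| \ll M^2\bigl(\log(\delta^{-1}K) + \log^2 M\bigr)$. Set $\B := \mathrm{Bohr}(\Gamma, \rho)$ with $\rho \asymp (M|\Gamma|)^{-1}$, so that $|\chi(x) - 1| \le 1/(2M)$ on $\B$ for every $\chi \in \Lambda$ and consequently $|\FF{\B}(\chi)| \ge (1 - 1/(2M))|\B|$. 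Regularise by Bourgain's lemma to a regular Bohr set $\B_*$ of the same dimension $|\Gamma|$ and radius up to a constant factor, which yields \eqref{cor:M_Bohr_size_intr}.

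\noindent The correlation then falls out of the moment identity
\[
\max_z |B \cap (\B_* + z)| \ge \frac{\|B \ast \B_*\|_2^2}{|B|\,|\B_*|} = \frac{1}{|\Gr|\,|B|\,|\B_*|}\sum_\chi |\FF{B}(\chi)|^2 |\FF{\B_*}(\chi)|^2 \ge \frac{|\B_*|}{2|\Gr|\,|B|}\sum_{\chi \in \Lambda}|\FF{B}(\chi)|^2,
\]
using $|\FF{\B_*}(\chi)|^2 \ge |\B_*|^2/2$ on $\Lambda$. Thus \eqref{cor:M_intersection_intr} reduces to showing $\sum_{\chi \in \Lambda}|\FF{B}(\chi)|^2 \ge |\Gr|\,|B|/(4M)$, i.e.\ that roughly a $1/M$-fraction of the Plancherel mass of $B$ is supported on $\Lambda$.

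\noindent\emph{Main obstacle.} The crux is this spectrum-mass inequality: the trivial bound $|\FF{B}|^2 < |B|^2/M^2$ off $\Lambda$ is hopelessly weak on its own. My plan to close the gap is to combine it with the $L^4$ estimates $\sum_\chi |\FF{B}|^4 \ll M|\Gr|\,|B|^3/K$ (from the energy bound in the first paragraph) and $\sum_{\chi \notin \Lambda}|\FF{B}|^4 \le (|B|/cM)^2 \sum_{\chi \notin \Lambda}|\FF{B}|^2$, organised through a dyadic decomposition of the spectrum; exploiting the density condition $100K^2\delta \le 1$ should then force the Fourier mass onto $\Lambda$ in the required proportion. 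Pushing this dyadic balance through cleanly --- and in particular tracing out the $\log^2 M$ factor in \eqref{cor:M_Bohr_dim_intr} --- is the technically delicate step.
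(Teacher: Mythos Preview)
Your plan breaks exactly at the step you flag as the main obstacle, and the dyadic $L^2$/$L^4$ balance you propose cannot close it: the spectrum--mass inequality $\sum_{\chi\in\Lambda}|\FF{B}(\chi)|^2\ge |\Gr|\,|B|/(4M)$ is in general \emph{false} under the hypotheses. Take $\Gr=\F_2^n$, $H\le\Gr$ a subspace, $\Lambda_0\subseteq H^\perp$ a dissociated set of size $2K$, and $A=B=H+\Lambda_0$ (this is Example~\ref{exm:H+L_intr}). Then $|A-A|\sim K|A|$ and $\M^2(A)\ll |A|^2/K$, so $M=O(1)$; but every nonzero Fourier coefficient satisfies $|\FF{A}(\chi)|\ll |A|/\sqrt K$, hence your set $\Lambda=\{\chi:|\FF{A}(\chi)|\ge |A|/(cM)\}$ reduces to $\{0\}$ once $K$ is large, and carries $L^2$ mass only $|A|^2=\d\,|\Gr|\,|A|\ll |\Gr|\,|A|$. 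Your $L^4$ majorisation $\sum_{\chi\notin\Lambda}|\FF{A}|^4\le (|A|/(cM))^2\sum_{\chi\notin\Lambda}|\FF{A}|^2$ yields only $|\Gr|\,|A|^3/K\ll |\Gr|\,|A|^3$, i.e.\ nothing; and the density condition $100K^2\d\le 1$ only pushes $\d$ (hence the mass on $\{0\}$) down further. Yet the theorem holds here with $\B_*=H$, so the obstruction is to your construction, not to the statement.

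The paper's route is different in kind: it never looks at the large spectrum of $B$. A Katz--Koester argument (Lemma~\ref{l:E(A,D)}) together with $\E(A,A+B)\le (M+O(1))|A|^3$ forces the uniform lower bound $\E_{k}(B)\ge |B|^{k+1}/(K\cdot O(M)^{k})$ for \emph{every} $k\ge 2$; comparing with $\E_2(B)\ll M|B|^3/K$ one finds, by pigeonhole, some $2\le k\ll\log M$ with
\[
\E_{k+1}(B)\;=\;\frac{1}{|\Gr|}\sum_\xi |\FF{B}(\xi)|^2\,\FF{\varphi_k}(\xi)\;\ge\;\frac{|B|}{O(M)}\,\E_k(B),\qquad \varphi_k(x):=|B_x|^k\,.
\]
Chang's lemma is then applied to $\varphi_k$ (not to $B$), giving $\dim\bigl(\Spec_{1/O(M)}(\varphi_k)\bigr)\ll M^2\log\bigl(|\Gr|\,\|\varphi_k\|_2^2/\|\varphi_k\|_1^2\bigr)\ll M^2(\log(\d^{-1}K)+k\log M)$, which is \eqref{cor:M_Bohr_dim_intr}. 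Since $\FF{\varphi_k}\ge 0$, the spectrum of $\varphi_k$ automatically carries a $(1-\zeta)$--fraction of the weighted sum above --- no separate mass inequality for $B$ is required --- and the Bohr set built to be Fourier--flat on that spectrum gives \eqref{cor:M_intersection_intr} directly. In the $H+\Lambda_0$ example, $\varphi_k$ concentrates on $H$ as $k$ grows, $\Spec(\varphi_k)\approx H^\perp$, and the argument recovers $H$: precisely the structural piece your spectrum--of--$B$ approach cannot see.
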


Let us make a few  remarks regarding Theorem \ref{t:main_intr}.
First of all, the bounds \eqref{cor:M_intersection_intr}---\eqref{cor:M_Bohr_size_intr} hold for {\it any} dense subset of our set $A$, which means that $A$ has a very rigid structure
(for example, see the second part of Corollary \ref{cor:M} below). 
Secondly, the dependencies on parameters in \eqref{cor:M_intersection_intr}---\eqref{cor:M_Bohr_size_intr} have polynomial nature, which distinguishes them  from the best  modern results on the structure of sets with small doubling, see \cite{sanders2012bogolyubov}, \cite{sanders2013structure}. 
Also, it appears that Theorem \ref{t:main_intr} does not depend on the recent breakthrough  progress concerning Polynomial Freiman--Ruzsa Conjecture \cite{GGMT_Marton}, \cite{GGMT_Marton_bounded}. 
Thirdly, the inversion of bound  \eqref{f:M_large_Bohr_intr} guarantees the existence of a large intersection of $A$ and a translation of a Bohr set $\B_*$, see inequality \eqref{cor:M_intersection_intr}. 
This is quite surprising, because usually we have the opposite picture: small Fourier coefficients help {\it to avoid} such structural objects as Bohr sets.
Finally,  our proof 
drastically differs  from the arguments of  \cite{GR_rectification}, \cite{Schoen_2.4_pol}, \cite{sanders2012bogolyubov}, which give us  inequality 
\eqref{f:GR_intr}. 
Instead of using almost periodicity of multiple convolutions \cite{CS} and the polynomial growth of sumsets, we apply some observations from the higher energies method, see papers \cite{SS_higher}, \cite{sh_uncertainty}.
In particular, our structural subset of $A-A$ has a different nature (it resembles some steps of the proof of \cite{GGMT_Marton}, \cite{GGMT_Marton_bounded} and even older constructions of Schoen, see \cite[Examples 5,6]{sh_str_survey}). 
Let us consider the following motivating example (see, e.g., \cite[Example 5]{sh_str_survey}).

\begin{example} ($H+\Lambda$ sets).
    Let $\Gr= \F_2^n$, $H\le \Gr$ be the space spanned by the first $k<n$ coordinate vectors, $\Lambda \subseteq H^\perp$ be a basis, $|\Lambda|=2K$, $K \to \infty$, and $A:=H+\Lambda$. 
    In particular, $|A| = |H| |\Lambda|$ and $|A-A| = (1+o(1)) \cdot K|A|$.
    Further 
    for $s\in H$ one has $A_s = A$, but it is easy to check that for $s\in (A-A)\setminus H$ each $A_s$  is a disjoint union of two shifts of $H$. 
    Hence 
\[
    \E(A) \sim |A|^3/K \sim \sum_{s\in H} |A_s|^2 \sim \sum_{s\in (A-A)\setminus H} |A_s|^2  
\]
    (the existence of two ``dual'' subsets of $A-A$ where $\E(A)$ is achieved   is in fact a general result, see \cite{Bateman-Katz_AP3} and also  \cite{sh_str_survey}), 
    but for $k>2$ the higher energies are  supported on the set of measure zero (namely, $H$) in the sense that 
\begin{equation}\label{tmp:E_k_first}
    \E_k (A) := \sum_s |A_s|^k = (1+o_k(1))  \cdot \sum_{s\in H} |A_s|^k \,.
\end{equation}
    Moreover if one considers the function 
$
    \_phi_k (s) = |A_s|^k \,,
$
    then it is easy to see that 
\[
    \FF{\_phi}_k (\chi) := \sum_s |A_s|^k \chi (s) = (1+o_k (1)) \cdot \sum_{s\in H} |A_s|^k \chi (s)  =  (1+o_k(1)) \cdot |H|^k \FF{H} (\chi)  \,,
\]
    where $\chi$ is 
    an arbitrary additive character on $\Gr$. 
    Thus, as $k$ goes to infinity, the Fourier transform $\FF{\_phi}_k (\chi)$ becomes very regular, 
    and this gives us a new way to extract the structure piece from sets  with small doubling. 
\label{exm:H+L_intr}
\end{example}

Roughly speaking, the same Example \ref{exm:H+L_intr} shows that Theorem \ref{t:main_intr} is tight (also, see more rigorous Example \ref{exm:Andersson} below). 
Indeed,  it is easy to see that $\FF{A} = \FF{H} \cdot \FF{\Lambda}$ and assuming that $\Lambda$ is  a sufficiently small set and $\M^2(\Lambda) \ll |\Lambda| \ll K$, we get 
$\M^2(A) = |H|^2 \M^2(\Lambda) \ll |A|^2/K$. 
Thus 
all conditions of Theorem \ref{t:main_intr} 
hold 
but the obvious subspace in $A-A$ is of course $H$ 
and we have 
$\codim (H) = \log (\d^{-1} K)$, which coincides  with \eqref{cor:M_Bohr_dim_intr} up to multiple constants.




\section{Definitions and notation}
\label{sec:def}

Let $\Gr$ be a finite abelian group and we denote the cardinality of $\Gr$ by $N$. 
Given a set $A\subseteq \Gr$ and a positive integer $k$, let us put 
$$
    \Delta_k (A) := \{ (a,a, \dots, a) ~:~ a\in A \} \subseteq \Gr^k \,.
$$
Now we have 
\begin{equation}\label{def:A-A_intersection}
    A-A := \{ a-b ~:~ a,b\in A\} = \{ x\in \Gr ~:~ A\cap (A+x) \neq \emptyset \} \,,
\end{equation}
and for $x\in A-A$ we denote by $A_x$ the intersection $A\cap (A+x)$. 
The useful inclusion of Katz--Koester \cite{Katz-Koester} is the following 
\begin{equation}\label{f:Katz-Koester}
    B+A_x \subseteq (A+B)_x \,.
\end{equation}
A natural generalization of the last formula in \eqref{def:A-A_intersection} 
is the set 
\begin{equation}\label{def:A-A_m_intersection}
    \{ (x_1, \dots, x_k) \in \Gr^k ~:~ A\cap (A+x_1) \cap \dots \cap (A+x_k) \neq \emptyset \} 
    =
    A^k - \Delta_k (A) \,,
\end{equation}
which is called the {\it higher difference set} (see \cite{SS_higher}). 
For any two sets $A,B \subseteq \Gr$ the {\it additive energy} of $A$ and $B$ is defined by
$$
\E (A,B) = \E^{} (A,B) = |\{ (a_1,a_2,b_1,b_2) \in A\times A \times B \times B ~:~ a_1 - b^{}_1 = a_2 - b^{}_2 \}| \,.
$$
If $A=B$, then  we simply write $\E^{} (A)$ for $\E^{} (A,A)$.
Also, one can define  the {\it higher energy} (see \cite{SS_higher} and another formula \eqref{f:Ek_Cf} below)  
\begin{equation}\label{f:E_k2}
      \E_{k}(A) =  | \{ (a_1,\dots,a_k,a'_1,\dots, a'_k) \in A^{2k} ~:~ a_1-a'_1 = \dots = a_k - a'_k \} | \,. 
\end{equation}

\bp 

Let $\FF{\Gr}$ be its dual group.
For any function $f:\Gr \to \mathbb{C}$ and $\chi \in \FF{\Gr}$ we define its Fourier transform using the formula \eqref{f:Fourier_representations}.
The Parseval identity is 
\begin{equation}\label{F_Par}
    N\sum_{g\in \Gr} |f(g)|^2
        =
            \sum_{\chi \in \FF{\Gr}} \big|\widehat{f} (\chi)\big|^2 \,.
\end{equation}
Given a function $f$ and $\eps \in (0,1]$ define the {\it spectrum} of $f$ as 
\begin{equation}\label{def:spectrum}
    \Spec_\eps (f) = \{ \chi \in \FF{\Gr} ~:~ |\FF{f} (\chi)|\ge \eps \| f\|_1 \} \,.
\end{equation}
We need Chang's lemma \cite{chang2002polynomial}.

\begin{lemma}
    Let $f: \Gr \to \C$ be a function  and $\eps \in (0,1]$ be a parameter.
    Then 
\[
    \dim (\Spec_\eps (f)) \ll \eps^{-2} \log \frac{\| f\|_2^2 N}{\| f\|_1^2} \,.
\]
\label{l:Chang}
\end{lemma}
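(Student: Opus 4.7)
I would run the standard Rudin-type duality argument. Let $d:=\dim(\Spec_\eps(f))$ and extract a dissociated subset $\Lambda\subseteq \Spec_\eps(f)$ with $|\Lambda|=d$ realising this dimension. For each $\chi\in\Lambda$ pick a unit complex number $\theta_\chi$ with $\overline{\theta_\chi}\FF{f}(\chi)=|\FF{f}(\chi)|$ and introduce the auxiliary trigonometric polynomial
\[
    P(g):=\sum_{\chi\in\Lambda}\theta_\chi\chi(g).
\]
The whole argument then rests on comparing a lower and an upper bound for the pairing $S:=\sum_{g\in \Gr} f(g)\overline{P(g)}$.

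The lower bound is immediate: expanding $P$ and using the choice of $\theta_\chi$,
\[
    S=\sum_{\chi\in\Lambda}\overline{\theta_\chi}\FF{f}(\chi)=\sum_{\chi\in\Lambda}|\FF{f}(\chi)|\ge \eps d\|f\|_1,
\]
where the last inequality is simply the definition of $\Spec_\eps(f)$ together with $|\Lambda|=d$.

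For the upper bound I would apply H\"older's inequality with exponents $\tfrac{2k}{2k-1}$ and $2k$, bound the $f$-factor by log-convexity of $L^p$-norms,
\[
    \|f\|_{2k/(2k-1)}\le \|f\|_1^{1-1/k}\|f\|_2^{1/k},
\]
and bound the $P$-factor by Rudin's $\Lambda(p)$ inequality for dissociated sets, which in the counting-measure normalisation (consistent with the Parseval identity \eqref{F_Par}) gives $\|P\|_{2k}\le \sqrt{Ckd}\cdot N^{1/(2k)}$ for an absolute constant $C$. Combining these and rearranging yields
\[
    \eps^2 d\ll k\cdot \left(\frac{\|f\|_2^2\, N}{\|f\|_1^2}\right)^{1/k}.
\]

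Finally I would optimise in the free integer parameter $k$: the choice $k:=\lceil \log(\|f\|_2^2 N/\|f\|_1^2)\rceil$ makes the parenthesised factor $O(1)$ and produces the claimed bound $d\ll \eps^{-2}\log(\|f\|_2^2 N/\|f\|_1^2)$. The only non-elementary ingredient is Rudin's inequality for dissociated sets, so the ``hard part'' is really packaged inside that classical $\Lambda(p)$ estimate; everything else is duality, a one-parameter interpolation, and a scalar optimisation.
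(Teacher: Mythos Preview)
The paper does not give its own proof of this lemma: it is simply quoted from Chang's paper \cite{chang2002polynomial} and used as a black box, so there is nothing to compare against line by line. Your argument is correct and is precisely the standard proof of Chang's lemma via Rudin's inequality that one finds in the literature (e.g.\ in Tao--Vu or in Green's surveys): duality against the Riesz product $P$, the lower bound $\eps d\|f\|_1$ from the definition of the spectrum, H\"older with exponent pair $(2k/(2k-1),2k)$, the interpolation $\|f\|_{2k/(2k-1)}\le \|f\|_1^{1-1/k}\|f\|_2^{1/k}$, Rudin's $\Lambda(2k)$ bound $\|P\|_{2k}\ll \sqrt{kd}\,N^{1/(2k)}$ in the counting normalisation, and then the choice $k\asymp \log(\|f\|_2^2 N/\|f\|_1^2)$. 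The only cosmetic point is to take $k=\max\{1,\lceil\log(\|f\|_2^2 N/\|f\|_1^2)\rceil\}$ so that the optimisation still makes sense in the degenerate case $\|f\|_2^2 N=\|f\|_1^2$; but that is a one-word fix, not a gap.
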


By  {\it measure} we mean any non--negative function $\mu$ on $\Gr$ such that $\sum_{g\in \Gr} \mu (g) = 1$. 
If $f,g : \Gr \to \C$ are some functions, then 
$$
    (f*g) (x) := \sum_{y\in \Gr} f(y) g(x-y) \quad \mbox{ and } \quad (f\circ g) (x) := \sum_{y\in \Gr} f(y) g(y+x) \,.
$$
One has 
\begin{equation}\label{f:F_svertka}
    \FF{f*g} = \FF{f} \cdot \FF{g} \,.
\end{equation}
Having a function $f:\Gr \to \C$ and a positive integer $k>1$, we  write $f^{(k)} (x) = (f\circ f \circ \dots \circ f) (x)$, where the convolution $\circ$ is taken $k-1$ times. 
For example, $A^{(4)} (0) = \E(A)$ and for $k\ge 2$ one has 
\begin{equation}\label{f:Ek_Cf}
    \E_k (A) =   \sum_{x} (A\circ A)^k (x) =
    \sum_{x_1,\dots,x_{k-1}} (A^{k-1} \circ \D_{k-1} (A))^2 (x_1,\dots,x_{k-1}) \,.
\end{equation}

\bp

A finite set $\Lambda \subseteq \Gr$ is called {\it dissociated} if any  equality of the form 
\[
    \sum_{\la \in \Lambda} \eps_\la \la = 0 \,,
    \quad \quad 
    \mbox{ where }
    \quad \quad 
    \eps_\la \in \{0,\pm 1\} \,, \quad \quad  \forall \la \in \Lambda 
\]
implies $\eps_\la = 0$ for all $\la \in \Lambda$. 
Let $\dim (A)$ be the size of the largest dissociated subset of $A$ and we call $\dim(A)$ the {\it additive dimension} of $A$. 
Let us denote all combinations of the form $\{ \sum_{\la \in \Lambda} \eps_\la \la \}_{\eps_\la \in \{0,\pm 1\}}$ as $\Span (\Lambda)$.

\bp



We need 
the generalized triangle inequality \cite[Theorem 7]{SS_higher}.

\begin{lemma}
    Let $k_1,k_2$ be positive integers, $W\subseteq \Gr^{k_1}$, $Y\subseteq \Gr^{k_2}$ and $X,Z \subseteq \Gr$. 
    Then
\begin{equation}\label{f:gen_triangle_S}
    |W\times X| |Y-\Delta_{k_2} (Z)| \le |W\times Y \times Z - \Delta_{k_1+k_2+1} (X)| \,. 
\end{equation}
\label{l:gen_triangle_S}
\end{lemma}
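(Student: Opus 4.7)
The plan is to prove the inequality by constructing an explicit injection from the product $W \times X \times (Y - \Delta_{k_2}(Z))$ into $W \times Y \times Z - \Delta_{k_1+k_2+1}(X)$, following the standard template of Ruzsa's triangle inequality but adapted to the higher difference sets. Since the right-hand side is defined via a single common shift $x \in X$ applied coordinatewise, the key observation is that this common shift can be recovered from the image, which is what drives the injectivity.

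First I would fix, for each $u \in Y - \Delta_{k_2}(Z)$, a distinguished representative $(y(u), z(u)) \in Y \times Z$ such that $u = y(u) - \Delta_{k_2}(z(u))$, i.e. $u_i = y(u)_i - z(u)$ for $i = 1, \dots, k_2$. Then I define the map
\[
\Phi : W \times X \times (Y - \Delta_{k_2}(Z)) \longrightarrow W \times Y \times Z - \Delta_{k_1+k_2+1}(X)
\]
by sending $(w, x, u)$ to the tuple $(w - \Delta_{k_1}(x),\, y(u) - \Delta_{k_2}(x),\, z(u) - x)$, where $\Delta_{k_1}(x)$ denotes the $k_1$-fold constant tuple. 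Since $w \in W$, $y(u) \in Y$, $z(u) \in Z$ and all coordinates are shifted by the same element $x \in X$, the image does land in $W \times Y \times Z - \Delta_{k_1+k_2+1}(X)$.

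The crux is injectivity. Given an image triple $(a, b, c)$ with $a \in \Gr^{k_1}$, $b \in \Gr^{k_2}$, $c \in \Gr$, I first recover $u$: because $b$ and $c$ were shifted by the same $x$, the difference $b - \Delta_{k_2}(c)$ equals $y(u) - \Delta_{k_2}(z(u)) = u$, independent of $x$. Once $u$ is known, the choice of representative uniquely determines $y(u)$ and $z(u)$; then $x = z(u) - c$ and finally $w = a + \Delta_{k_1}(x)$. Hence the preimage is unique, and the map is injective, giving $|W||X||Y - \Delta_{k_2}(Z)| \le |W \times Y \times Z - \Delta_{k_1+k_2+1}(X)|$, which is precisely \eqref{f:gen_triangle_S}.

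There is no genuine obstacle here: once the representative selection is made and the definition of $\Phi$ is written down, injectivity is a short bookkeeping check. The only care needed is to match the shape of the higher difference set on the right, ensuring that the $k_1 + k_2 + 1$ coordinates of the image are all shifted by a single common element of $X$, which is exactly how $\Phi$ was constructed.
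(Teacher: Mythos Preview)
Your proof is correct: the injection $\Phi$ is well defined, lands in the right set, and your injectivity check is complete and clean. The paper itself does not prove this lemma at all --- it simply quotes it from \cite{SS_higher} --- but your argument is exactly the standard Ruzsa--style representative-fixing injection that one expects for such a generalized triangle inequality, and it matches in spirit what is done in that reference.
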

The signs $\ll$ and $\gg$ are the usual Vinogradov symbols. 
All logarithms are to base $e$.

\section{The model case}
\label{sec:proof}

In this section we follow the well--known logic (see \cite{Green_models}) and first  consider the simplest case $\Gr=\F_2^n$, where the technical difficulties are minimal, and the case of general groups will be considered later. 

We need a generalization of 
a result from \cite[Remark 6]{sh_uncertainty} (also see the proof of \cite[Theorem 4]{sh_uncertainty}).
For the convenience  of the reader, we recall our argument. 

\begin{lemma}
    Let $A,B\subseteq \Gr$ be sets, $|A-A| = K|A|$ and  $k\ge 2$ be an integer.
    Then 
\begin{equation}\label{f:E(A,D)}
    \E_k (B) \E^k (A,A+B) \ge \frac{|A|^{2k+1} |B|^{2k}}{K} \,.
\end{equation}
\label{l:E(A,D)}
\end{lemma}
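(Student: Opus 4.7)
Plan: The natural object to analyse is the mixed moment
$$T := \sum_{s \in \Gr} |A_s| \cdot |B_s|^k \,.$$
By direct double counting, $T$ equals the number of tuples $(a, a', b_1, b'_1, \ldots, b_k, b'_k) \in A^2 \times B^{2k}$ with $a - a' = b_i - b'_i$ for every $i$. The crucial combinatorial move is the substitution $c_i := a + b'_i = a' + b_i \in A + B$, which yields the ``dual'' representation
$$T = \sum_{\vec{c} \in (A+B)^k} N(\vec{c})^2, \qquad N(\vec{c}) := |\{a \in A : c_i - a \in B \text{ for all } i\}| \,.$$
Here the $b_i, b'_i$ are recovered from $(a, a', \vec c)$ by $b'_i = c_i - a$ and $b_i = c_i - a'$, so the two expressions really count the same tuples.

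The proof will then proceed by bounding $T$ in two different ways. For an \emph{upper bound}, I would apply Cauchy--Schwarz on the $\vec{c}$-sum iterated over the $k$ coordinate slots: each slot produces a factor of the form $|A \cap (A+B+t)|$ — the building block of $\E(A, A+B)$ — while the ``pair $(a, a')$'' structure encodes the coincidence $b_i - b'_i = a - a'$ common across all $i$, which produces a factor of $\E_k(B)$. The target upper bound has the shape
$$T^2 \;\le\; \E_k(B) \cdot \E^k(A, A+B) \cdot \text{(explicit polynomial factor)}\,.$$
For a \emph{lower bound}, I would use H\"older's inequality on the $s$-variable, exploiting the identity $\sum_s |A_s| = |A|^2$ and the support constraint $|A-A| = K|A|$, together with the Katz--Koester inclusion $B + A_s \subseteq (A+B)_s$ from \eqref{f:Katz-Koester} to trade excess powers of $K$ for energy-type quantities. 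The aim is
$$T \;\ge\; \frac{|A|^{(2k+1)/2}\,|B|^{k}}{K^{1/2}\cdot \text{(same polynomial factor)}}\,,$$
so that squaring and combining with the upper bound yields exactly $\E_k(B)\,\E^k(A,A+B) \ge |A|^{2k+1}|B|^{2k}/K$.

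The main obstacle, as I see it, is to get the $K$-exponent right. A naive H\"older on $\sum_s |A_s|$ supported on $A-A$ would produce $T \ge |A|^{?}|B|^{?}/K^{k-1}$, analogous to the classical bound $\E_k(A) \ge |A|^{k+1}/K^{k-1}$, whereas the inequality we want features only a single $K$ in the denominator. The Katz--Koester inclusion is exactly the tool that ``converts'' the $k-1$ surplus factors of $K$ into a power of $|(A+B)_s|$, and hence into $\E(A,A+B)$; the symmetry of $T$ across the $k$ coordinates $c_1,\ldots,c_k$ is what allows this conversion to yield the $k$-th power of $\E(A,A+B)$. Getting this swap to fit cleanly into a single Cauchy--Schwarz/H\"older chain (so that the polynomial factors on the two sides of the bound on $T$ cancel and the final exponent of $K$ is exactly $1$) is the delicate step where the proof lives.
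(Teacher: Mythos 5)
Your plan is built around the mixed moment $T=\sum_s |A_s|\,|B_s|^k$ and its dual representation $T=\sum_{\vec c}N(\vec c)^2$; the dual counting identity is correct, but the two bounds you would need on $T$ are precisely what is not provided, and you say yourself that this is ``the delicate step where the proof lives.'' As stated I do not see how to make the chain close. The natural Cauchy--Schwarz $T^2\le \E_k(B)\sum_s|A_s|^2|B_s|^k$ does not produce $\E^k(A,A+B)$, and splitting the $\vec c$-sum coordinatewise does not obviously factor into $k$ copies of $\E(A,A+B)$ either, because $N(\vec c)$ couples all $k$ coordinates through the common variable $a$. On the lower-bound side you invoke Katz--Koester ``to trade excess powers of $K$,'' but Katz--Koester by itself only gives $|S_s|\ge|B+A_s|$; without a further tool to turn $|B+A_s|$ into something comparable to $|A_s|$ and a fixed global quantity, the $K$-exponent does not come out to $1$.

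The paper's proof takes a genuinely different and more economical route. It never introduces $T$. Instead it pivots on the single cardinality $\mathcal{D}_k:=|B^{k-1}-\Delta_{k-1}(B)|$, the size of the higher difference set of $B$. Two facts are combined. First, Katz--Koester gives $\E(A,S)\ge\sum_x|A_x|\,|B+A_x|$, and then the \emph{generalized triangle inequality} (Lemma~\ref{l:gen_triangle_S}, applied with $Z=A_x$) gives $|B+A_x|^k\ge |A_x|\,\mathcal{D}_k$; a single H\"older step over $x$ (using $\sum_x|A_x|=|A|^2$ and $|A-A|=K|A|$) then yields $\E^k(A,S)\ge \mathcal{D}_k\,|A|^{2k+1}/K$ with the exponent of $K$ exactly $1$. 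Second, one Cauchy--Schwarz on the function $B^{k-1}\circ\Delta_{k-1}(B)$ (whose $\ell^1$-norm is $|B|^k$, $\ell^2$-norm squared is $\E_k(B)$, and support size is $\mathcal{D}_k$) gives $|B|^{2k}\le\mathcal{D}_k\,\E_k(B)$. Multiplying eliminates $\mathcal{D}_k$ and gives the lemma. The ingredient your plan is missing is exactly the generalized triangle inequality for higher difference sets, which is what converts the Katz--Koester lower bound on $|B+A_x|$ into a clean power of $|A_x|$ times a quantity that also controls $\E_k(B)$; without it, the ``polynomial factors'' you hope will cancel have no reason to do so.
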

\begin{proof}
Let $S=A+B$.
In view of inclusion \eqref{f:Katz-Koester}, we have
\[
    \E (A,S) = \sum_{x} |A_x| |S_x| \ge \sum_{x} |A_x| |B+A_x| \,.
\]
    Using Lemma \ref{l:gen_triangle_S}, we see that for any $Z\subseteq \Gr^l$ the following holds 
\[
    |Z| \mathcal{D}_k := |Z| |B^{k-1} - \D_{k-1} (B)| \le |B-Z|^k
\]
    and therefore by the H\"older inequality one has 
\begin{equation}\label{tmp:12.5_1}
    \E (A,S) \ge \mathcal{D}^{1/k}_k \sum_x |A_x|^{1+1/k} 
    \ge 
    \mathcal{D}^{1/k}_k  \left( \sum_x |A_x| \right)^{1+1/k} |A-A|^{-1/k} \,.
\end{equation}
    Thus 
\begin{equation}\label{tmp:D_k_1}
    \E^k (A,S) \ge \mathcal{D}^{}_k |A|^{2k+1} K^{-1} \,. 
\end{equation}
    On the other hand, applying the Cauchy--Schwarz inequality and formula \eqref{f:Ek_Cf}, we derive
\begin{equation}\label{tmp:D_k_2}
    |B|^{2k} = \left( \sum_{y\in \Gr^{k-1}} (B^{k-1} \circ \D_{k-1}(B) (y)  \right)^2 
    \le 
    \mathcal{D}^{}_k \sum_{y\in \Gr^{k-1}} (B^{k-1} \circ \D_{k-1}(B)^2 (y) 
    =
    \mathcal{D}^{}_k \E_{k} (B) \,.    
\end{equation}
    Combining \eqref{tmp:D_k_1} and \eqref{tmp:D_k_2},
    one obtains 
\begin{equation}\label{tmp:D_k_2'}
    \E_k (B) \E^k (A,S) \ge \E_k (B) \mathcal{D}^{}_k |A|^{2k+1} K^{-1} 
    \ge 
    |B|^{2k} |A|^{2k+1} K^{-1} 
\end{equation}
    as required. 
$\hfill\Box$
\end{proof}

\bp 

Now we are ready to prove our main technical proposition.  

\begin{proposition}
    Let $\Gr = \F_2^n$, $A,B \subseteq \Gr$ be sets, 
    $|A|=\d N$, 
    $|B|= \omega |A|$,  $|A+B| = K|A|$, 
    $|A-A|= K'|A|$,
    and $0 < M \le K$, $0<M' \le K'$, 
    $\kappa>0$, $\zeta \in (0,1)$, $1<T\le M' (M+\kappa) \omega^{-1}$
    be some parameters. 
    Suppose that 
\begin{equation}\label{cond:param_rho+E}
    \M^2 (A) \le \frac{M|A|^2}{K},\, 
    \quad \quad 
    \E(B) \le \frac{M'|B|^3}{K'}\,, 
\end{equation}
    and 
\begin{equation}\label{cond:param}
    |A+B|^2 \le \kappa |A| N \,.
\end{equation}
    Then there is $\mathcal{L} \le \Gr$ ($\mathcal{L}$ depends on $B$ only) and  $z\in \Gr$ such that 
\[
    |B\cap (\mathcal{L} + z)| \ge \frac{(1-\zeta)\omega |\mathcal{L}|}{T(M+\kappa)} \,, 
\]
    and 
\[
    \codim (\mathcal{L}) \ll (\omega \zeta)^{-2} T^2 (M+\kappa)^2
        \cdot \left( \log ( \d^{-1} K') +  \log_T (M'(M+\kappa)\omega^{-1}) \cdot \log ((M+\kappa)\o^{-1}) \right) \,.
\]
\label{p:param}
\end{proposition}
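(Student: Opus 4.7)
My plan is to translate the small-Fourier hypothesis on $A$ into a lower bound on the higher energies $\E_k(B)$, then to extract a spectral structure via Chang's lemma, and finally to iterate a density increment to reach the target density.

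The first step is a Parseval computation. Expanding
$$
\E(A,A+B) \;=\; \frac{1}{N}\sum_{\chi}|\hat A(\chi)|^2\,|\widehat{A+B}(\chi)|^2,
$$
the principal character contributes $|A|^2|A+B|^2/N \le \kappa|A|^3$ by \eqref{cond:param}, while the remaining terms contribute at most $\M^2(A)\cdot|A+B| \le M|A|^3$ by \eqref{cond:param_rho+E}. Hence $\E(A,A+B)\le(M+\kappa)|A|^3$, and Lemma~\ref{l:E(A,D)} yields, for every integer $k\ge 2$,
$$
\E_k(B) \;\gg\; \frac{\omega^{k-1}|B|^{k+1}}{K'(M+\kappa)^k}.
$$

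The second step extracts spectral structure from $\E_k(B)$. I would dyadically pigeonhole the identity $\E_k(B)=\sum_{\vec t\in\Gr^{k-1}}|B_{\vec t}|^2$ (with $B_{\vec t}=B\cap(B+t_1)\cap\cdots\cap(B+t_{k-1})$) to locate a popular level $\tau$ and a set $P$ of $(k-1)$-tuples with $|B_{\vec t}|\ge\tau$ on $P$ and $|P|\tau^2 \gg \E_k(B)/(\log|B|)^{O(1)}$. Selecting any $\vec t^*\in P$ gives a concentration set $C = B_{\vec t^*}\subseteq B$ of size $\ge\tau$. Then apply Chang's lemma (Lemma~\ref{l:Chang}) to an appropriate $B$-function (for instance $B$ itself or the indicator of $C$) with threshold $\eta\sim \omega\zeta/(T(M+\kappa))$; this produces a dissociated set of size $\ll \eta^{-2}\log(\delta^{-1}K') = (\omega\zeta)^{-2}T^2(M+\kappa)^2\log(\delta^{-1}K')$ whose $\Span$ defines a subspace $\mathcal L^\perp\subseteq\FF\Gr$ containing the entire $\eta$-spectrum. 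By Plancherel, $\max_{z}|B\cap(\mathcal L+z)|/|\mathcal L|\gg \eta$.

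If $\eta$ already matches the target density $(1-\zeta)\omega/(T(M+\kappa))$ we are done; otherwise I would run a density increment, restricting $B$ to its densest coset of $\mathcal L$ to boost the effective $\omega$ by at least a factor $T$, and then re-run the argument inside the smaller ambient group $\mathcal L$. The hypothesis $\E(B)\le M'|B|^3/K'$ caps the density, so after $O(\log_T(M'(M+\kappa)\omega^{-1}))$ rounds we reach the target; each round contributes $\ll \eta^{-2}\log((M+\kappa)\omega^{-1})$ to the codimension through a nested Chang application, giving the claimed two-term bound. The main obstacle will be the iteration bookkeeping---verifying that after restriction to a coset of $\mathcal L$ the hypotheses \eqref{cond:param_rho+E}--\eqref{cond:param} still hold with appropriately updated parameters, and that the dissociated sets from successive rounds assemble into one dissociated set defining the final $\mathcal L^\perp$.
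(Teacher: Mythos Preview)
Your first step (the Parseval bound on $\E(A,A+B)$ and the invocation of Lemma~\ref{l:E(A,D)}) matches the paper exactly and is correct. The gap is in your second step, and it is a genuine one.

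You propose to apply Chang's lemma to $B$ (or to some subset $C=B_{\vec t^*}$) at level $\eta\sim\omega\zeta/(T(M+\kappa))$, obtain a subspace $\mathcal L$ with $\Spec_\eta\subseteq\mathcal L^\perp$, and then claim ``By Plancherel, $\max_z|B\cap(\mathcal L+z)|/|\mathcal L|\gg\eta$.'' This inference is false in general. Containing the $\eta$-spectrum in $\mathcal L^\perp$ tells you nothing unless you know that $B$ actually \emph{has} Fourier mass at level $\eta$; a random set of density $\beta$ has empty $\eta$-spectrum for $\eta\gg\beta^{1/2}$, and then your $\mathcal L$ is all of $\Gr$ and the density is just $\beta$. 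Nowhere in your argument do you use the lower bound on $\E_k(B)$ to force large Fourier mass of $B$ onto a structured set---the pigeonholed tuple $\vec t^*$ and the set $C$ play no role in the Chang step as you have written it.

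The paper's mechanism is different and is what you are missing. From the lower bound on $\E_k$ and the upper bound $\E_2\le M'|B|^3/K'$ one finds, by comparing consecutive ratios, some $k\le k_0\sim\log_T(M'(M+\kappa)\omega^{-1})$ with
\[
\E_{k+1}(B)\ \ge\ \frac{|B|}{M_*}\,\E_k(B),\qquad M_*=\omega^{-1}(M+\kappa)T\,.
\]
Writing $\varphi(x)=|B_x|^k$ (so $\hat\varphi\ge 0$, $\|\varphi\|_1=\E_k$), this ratio inequality is exactly
\[
\frac{1}{N}\sum_\xi|\hat B(\xi)|^2\,\hat\varphi(\xi)\ \ge\ \frac{|B|\,\E_k}{M_*}\,,
\]
which forces a large portion of $|\hat B|^2$ to sit on $\Spec_{\zeta/M_*}(\varphi)$. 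Chang's lemma is then applied to $\varphi$, not to $B$, and the log factor is $\log(\|\varphi\|_2^2 N/\|\varphi\|_1^2)\le\log(|B|^kN/\E_k)$, which is controlled by the $\E_k$ lower bound. This single step replaces your proposed iteration; there is no density increment loop and hence no need to transfer the hypotheses on $A$ to a subgroup (your acknowledged ``main obstacle'', which would indeed be fatal since $\M(A)$ does not localise).
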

\begin{proof}
    Let $a=|A|$, $b= |B| = \beta N = \o \d N$, $S=A+B$ and 
    $\E_k = \E_k (B)$. 
    Thanks to the condition \eqref{cond:param} and Parseval identity \eqref{F_Par}, we have 
\[
    \E(A,S) \le \frac{a^2 |A+B|^2}{N} + Ma^3 \le (M+\kappa) a^3 
    \,.
\]
    Using Lemma \ref{l:E(A,D)}, we obtain for all integers  $k\ge 2$ that 
\begin{equation}\label{tmp:Ek_low}
    \E_{k+1} \ge 
    \frac{b^{2k+2}}{K' (M+\kappa)^{k+1} a^k} 
    =
    \frac{b^{k+2} \omega^k}{K' (M+\kappa)^{k+1}} \,.
\end{equation}
    Suppose that for any $k\ge 2$ the following holds 
\[
    \E_{k+1} \le \frac{b \E_k}{M_*} \,,
\]
    where $M_*\ge 1$ is a parameter. 
    Then thanks to \eqref{cond:param_rho+E}, we derive 
\begin{equation}\label{tmp:Ek+1_M'}
    \E_{k+1} \le \frac{b^{k-1} \E_2}{M^{k-1}_*} \le \frac{M' b^{k+2}}{K' M^{k-1}_*} \,,
\end{equation}
    and using \eqref{tmp:Ek_low}, we have 
\[
    M' (M+\kappa)^{k+1} \ge \omega^k M^{k-1}_* \,.
\]
    Put $M_* = \omega^{-1} (M+\kappa) T$.
    It gives us 
\[
    M' (M+\kappa)^2 \ge \omega T^{k-1} 
\]
    and we obtain a contradiction if $k \ge k_0 := \lceil 10 \log_T (M'(M+\kappa)\omega^{-1}) \rceil + 10$, say. 
    Thus there is $2\le k \le k_0$ such that
\begin{equation}\label{tmp:21.11_1}
    \E_{k+1} \ge \frac{b \E_k}{M_*} \,.
\end{equation}
    Consider the function  $\_phi(x) = |B_x|^k$.
    Clearly, $\FF{\_phi} \ge 0$ (use, for example, formula  \eqref{f:F_svertka}) and $\| \_phi\|_1 = \E_k$. 
    In terms of Fourier transform we can rewrite inequality \eqref{tmp:21.11_1} as 
\[
    \E_{k+1} = \frac{1}{N} \sum_\xi |\FF{B} (\xi)|^2 \FF{\_phi} (\xi) \ge \frac{b \E_k}{M_*} = \frac{\omega b \E_k}{T(M+\kappa)} \,.
\]
    Now we use the parameter $\zeta$ and derive
\begin{equation}\label{f:correlation_Spec}
    \frac{1}{N} \sum_{\xi \in \Spec_{\zeta/M_*} (\_phi)} |\FF{B} (\xi)|^2 \FF{\_phi} (\xi) 
        \ge  
    \frac{(1-\zeta) \omega b \E_k}{T(M+\kappa)} \,.
\end{equation}
    Let $\Lambda \subseteq \Spec_{\zeta/M_*} (\_phi)$ be a dissociated set such that $|\Lambda| = \dim (\Spec_{\zeta/M_*} (\_phi))$. 
    Put $\mathcal{L}^\perp = \Span \Lambda$. 
    Then 
\begin{equation}\label{tmp:sigma_L(A)}
    \frac{(1-\zeta) \omega b \E_k}{T(M+\kappa)} 
    \le 
        \frac{1}{N} \sum_{\xi \in \mathcal{L}^\perp} |\FF{B} (\xi)|^2 \FF{\_phi} (\xi)
    \le 
        \frac{\E_k}{N} \sum_{\xi \in \mathcal{L}^\perp} |\FF{B} (\xi)|^2 
    =
    \frac{\E_k}{|\mathcal{L}|} \sum_x (B\circ B) (x) \mathcal{L} (x) \,.
\end{equation}
    By the pigeonhole principle there is $z\in B$ such that 
\begin{equation}\label{f:A_L+z}
    |B\cap (\mathcal{L} + z)| \ge \frac{(1-\zeta)\omega|\mathcal{L}|}{T(M+\kappa)} \,.
\end{equation}
    Using the Chang Lemma \ref{l:Chang} and estimate \eqref{tmp:Ek_low}, we derive
\begin{equation}\label{tmp:12.09_1}
    \codim (\mathcal{L}) 
        \ll 
    \zeta^{-2} M^2_* \log \left( \frac{\| \_phi\|_2^2 N}{\| \_phi \|^2_1} \right)
    \ll
     (\omega \zeta)^{-2} T^2 (M+\kappa)^2 \log \left( \frac{b^k N}{\E_k} \right)
\end{equation}
\begin{equation}\label{tmp:12.09_2}
         \ll 
    (\omega \zeta)^{-2} T^2 (M+\kappa)^2 \log\left( \frac{K' (M+\kappa)^k}{\beta \o^{k-1}}  \right) \,.
\end{equation}
    Recalling that $k\le k_0$, we finally obtain 
\[
    \codim (\mathcal{L}) 
        \ll 
    (\omega \zeta)^{-2} T^2 (M+\kappa)^2
        \cdot 
\]
\begin{equation}\label{tmp:12.09_3}
    \left( \log ( \d^{-1} K')
        +  \log_T (M'(M+\kappa) \omega^{-1}) \cdot \log ((M+\kappa) \o^{-1}) \right) \,.
\end{equation}
    This completes the proof. 
$\hfill\Box$
\end{proof}

\begin{remark}
    Suppose that $A=B$. 
    Then in terms of Proposition \ref{p:param} one has 
\[
    \E(A) \le \frac{|A|^4}{N} + \M^2 (A) |A| 
    \le 
    \frac{|A|^4}{N} + \frac{M|A|^3}{K}
    \le 
    \frac{M|A|^3}{K} \left( 1+ \frac{\d K}{M} \right)
    \le 
    \frac{M|A|^3}{K} \left( 1+ \frac{\kappa}{KM} \right) \,.
\]
    Thus $M' \le \left( 1+ \frac{\kappa}{KM} \right) M$. 
\label{r:M_M'}
\end{remark}



We derive some consequences of Proposition \ref{p:param}. 
Let us start with the case when $\M(A)$ is really small, namely, $\M^2(A) \le (2-\eps)|A|^2/K$. 

\begin{corollary}
    Let $\Gr = \F_2^n$, $A \subseteq \Gr$ be a set, 
    $|A|= \d N$, 
    $|A-A|=K|A|$, and $\eps \in (0,1)$ be a parameter. 
    Suppose that 
\[
    100 K^2 \d \le \eps \,.
\]
    Then either there is $x\neq 0$ such that
\[
    |\FF{A} (x)|^2 \ge \frac{(2-\eps)|A|^2}{K} \,,
\]
    or 
    there exists $\mathcal{L} \le \Gr$ with $\mathcal{L} \subseteq A-A$ and 
\[
    \codim (\mathcal{L}) \ll \eps^{-2} \log (\d^{-1} K )  + \eps^{-3} \,.
\]
\label{cor:2-eps}
\end{corollary}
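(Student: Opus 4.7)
The plan is to specialize Proposition \ref{p:param} to $B = A$ with $M := 2 - \eps$. If the Fourier hypothesis $\M^2(A) \le M|A|^2/K$ fails, the first alternative of the corollary is immediate, so assume it holds. Since $\Gr = \F_2^n$ gives $A+A = A-A$, we have $|A+A|^2 = K^2|A|^2 = K^2\d \cdot |A| N$, so hypothesis \eqref{cond:param} holds with $\kappa := K^2 \d \le \eps/100$. With $B = A$ we get $K' = K$, and Remark \ref{r:M_M'} bounds $M' \le (1+\kappa/(KM))M \le M + \eps/100$; in particular $M + \kappa \le 2 - \tfrac{99}{100}\eps$.

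Now pick $T := 1 + \eps/5$ and $\zeta := \eps/5$. A direct expansion gives
\[
T(M+\kappa) \le \left(1+\tfrac{\eps}{5}\right)\left(2 - \tfrac{99\eps}{100}\right) = 2 - \tfrac{59\eps}{100} + O(\eps^2) < 2 - \tfrac{2\eps}{5} = 2(1-\zeta),
\]
so Proposition \ref{p:param} produces a subspace $\mathcal{L} \le \Gr$ and $z\in \Gr$ with
\[
|A \cap (\mathcal{L}+z)| \ge \frac{(1-\zeta)|\mathcal{L}|}{T(M+\kappa)} > \frac{|\mathcal{L}|}{2}.
\]
Setting $Y := (A \cap (\mathcal{L}+z)) - z \subseteq \mathcal{L}$, the inequality $|Y| > |\mathcal{L}|/2$ forces, in $\F_2^n$, the subsets $Y$ and $\ell - Y$ of $\mathcal{L}$ to meet for every $\ell \in \mathcal{L}$, whence $\ell \in Y - Y$. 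Thus $\mathcal{L} = Y - Y \subseteq A - A$, which yields the required containment.

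It remains to insert the parameter choices into \eqref{tmp:12.09_3}. With $\omega = 1$ and $\zeta, T-1 = \Theta(\eps)$ the prefactor $(\omega\zeta)^{-2}T^2(M+\kappa)^2$ is $\Theta(\eps^{-2})$; since $M'(M+\kappa)\omega^{-1} = O(1)$ and $\log T = \Theta(\eps)$, the factor $\log_T(M'(M+\kappa)\omega^{-1})$ is $O(\eps^{-1})$, while $\log((M+\kappa)\omega^{-1}) = O(1)$. The two contributions combine exactly to $\codim(\mathcal{L}) \ll \eps^{-2}\log(\d^{-1}K) + \eps^{-3}$. The main delicacy is the constant bookkeeping: the $\eps/100$ slack in both $\kappa$ and $M' - M$ must be small enough that one can still pick $\zeta, T-1 = \Theta(\eps)$ with $T(M+\kappa) < 2(1-\zeta)$, and it is precisely the narrowness of this gap --- forcing $\log T = \Theta(\eps)$ --- that produces the $\eps^{-3}$ term.
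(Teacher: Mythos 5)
Your proof is correct and takes essentially the same route as the paper's, which applies Proposition \ref{p:param} with $B=-A$ (identical to $A$ in $\F_2^n$), $\kappa=\zeta=\eps/100$, and $T=1+\kappa$; your choices $\zeta=\eps/5$, $T=1+\eps/5$, $\kappa=K^2\d$ are cosmetic variants giving the same density-above-$1/2$ conclusion and codimension bound. You also correctly make explicit the step that a subset of density $>1/2$ in a coset of $\mathcal{L}$ forces $\mathcal{L}\subseteq A-A$, which the paper states without elaboration.
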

\begin{proof}
    We apply Proposition \ref{p:param} with 
    $B=-A$, $\omega =1$, 
    $M=2-\eps$, $M' = \left( 1+ \frac{\kappa}{KM} \right) M \le M+\kappa$ (see Remark \ref{r:M_M'}), $T=1+\kappa$, and $\kappa = \zeta =\eps/100$, say. 
    Thus we find $\mathcal{L} \le \Gr$ and  $z\in \Gr$ such that 
\begin{equation}\label{tmp:24.11}
    |A\cap (\mathcal{L} + z)| \ge \frac{(1-\zeta)|\mathcal{L}|}{T(M+\kappa)}
    \ge  
    |\mathcal{L}| \left( \frac{1}{2} + \frac{\eps}{8} \right) \,,
\end{equation}
    and 
\[
\codim (\mathcal{L}) \ll \zeta^{-2} T^2 (M+\kappa)^2
        \cdot \left( \log (\d^{-1} K) +  \log_T (M'(M+\kappa)) \cdot \log (M+\kappa) \right) 
\]
\[
    \ll 
    \eps^{-2} \log (\d^{-1} K ) + \eps^{-3} \,.
\]
    The inequality \eqref{tmp:24.11} implies that $\mathcal{L} \subseteq A-A$. 
    This completes the proof. 
$\hfill\Box$
\end{proof}

\bp

Now we are ready to obtain our structural result for sets with small doubling and small $\M(A)$. 
Given two sets $A,B \subseteq \Gr$ we write $A\dotplus B$ if $|A+B| = |A||B|$.

\begin{corollary}
    Let $\Gr = \F_2^n$, $A \subseteq \Gr$ be a set, 
    $|A|= \d N$, 
    $|A-A|=K|A|$, and $1\le M \le K$ be a parameter. 
    Suppose that 
\[
    100 K^2 |A| \le N \,.
\]
    Then either there is $x\neq 0$ such that
\begin{equation}\label{f:M_large}
    |\FF{A} (x)|^2 > \frac{M|A|^2}{K} \,,
\end{equation}
    or for any $B\subseteq A$, $|B| =\beta N$ there exist $H \le \Gr$, $z\in \Gr$ with $H \subseteq 3B+z$ and 
\[
    \codim (H) \ll (\d \beta^{-1} M)^2 \left( \log ( \d^{-1} K) + \log^2 (\d \beta^{-1} M)\right ) \,.
\]
    In the last case one can find $\Lambda \subseteq \Gr/H$ such that 
\[
    |A\cap (\Lambda \dotplus H)| \ge \frac{|A|}{16M}
    \quad \quad 
    \mbox{ and }
    \quad \quad 
    |\Lambda| |H| \le 16M |A| \,.
\]
\label{cor:M}
\end{corollary}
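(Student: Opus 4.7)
The plan is to apply Proposition~\ref{p:param} with appropriately chosen parameters, first to the pair $(A,B)$ to extract $H$ satisfying $H\subseteq 3B+z$, and then to $(A,A)$ for the $\Lambda$--statement. Set $\omega=\beta/\delta$, $\kappa=1/100$, $\zeta=1/2$, and $T$ a fixed small constant. The doubling hypothesis $100K^2|A|\le N$ together with $|A+B|\le|A-A|=K|A|$ gives $|A+B|^2\le K^2|A|^2\le |A|N/100=\kappa|A|N$, verifying \eqref{cond:param}; the trivial bound $\E(B)\le|B|^3$ lets us take $M'=K$. Proposition~\ref{p:param} then produces a subgroup $\mathcal{L}\le\Gr$ and $z_0\in\Gr$ with
\[
|B\cap(\mathcal{L}+z_0)|\gtrsim \omega|\mathcal{L}|/M,\qquad \codim(\mathcal{L})\ll (M/\omega)^2\bigl(\log(\delta^{-1}K)+\log^2(M/\omega)\bigr),
\]
after collecting the logarithmic factors and using that $\delta\beta^{-1}M=M/\omega$ to match the stated form.

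To upgrade $\mathcal{L}$ to $H\subseteq 3B+z$, write $X:=B\cap(\mathcal{L}+z_0)$. If $|X|>|\mathcal{L}|/2$, a direct pigeonhole argument ($|X\cap(X+\ell)|\ge 2|X|-|\mathcal{L}|>0$ for every $\ell\in\mathcal{L}$) gives $X-X=\mathcal{L}$, so in characteristic $2$ we have $\mathcal{L}=X+X\subseteq B+B\subseteq 3B+b$ for any $b\in B$, and we take $H=\mathcal{L}$. Otherwise, the translate $Y:=X-z_0\subseteq\mathcal{L}$ contains $0$ (note $z_0\in B\cap(\mathcal{L}+z_0)=X$) and has density $\alpha\gtrsim\omega/M$ in $\mathcal{L}$; a Bogolyubov--Ruzsa argument inside the subgroup $\mathcal{L}$ produces $H\le\mathcal{L}$ with $\codim_\mathcal{L} H\ll \alpha^{-2}\asymp(M/\omega)^2$ and $H\subseteq 3Y+y\subseteq 3B+z$ for suitable $y\in Y$, $z\in\Gr$. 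Summing codimensions yields the claimed bound.

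For the $\Lambda$--statement, specialize to $B=A$ (so $\omega=1$), giving $\codim H\ll M^2(\log(\delta^{-1}K)+\log^2 M)$. Translating inequality \eqref{tmp:sigma_L(A)} in the proof of Proposition~\ref{p:param} to characteristic $2$ yields the second--moment bound $\sum_C|A\cap C|^2\gtrsim|A||H|/M$ with $C$ ranging over cosets of $H$. Defining $\Lambda:=\{C:|A\cap C|\ge|H|/(16M)\}$, the size bound $|\Lambda||H|\le 16M|A|$ follows from $|\Lambda|\cdot|H|/(16M)\le \sum_{C\in\Lambda}|A\cap C|\le|A|$; cosets outside $\Lambda$ contribute at most $|H||A|/(16M)$ to the second moment, so the dense part satisfies $\sum_{C\in\Lambda}|A\cap C|^2\gtrsim|A||H|/M$, and dividing by $\max_C|A\cap C|\le|H|$ recovers $|A\cap(\Lambda\dotplus H)|\ge|A|/(16M)$.

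The main obstacle is the Bogolyubov extraction in the small--intersection regime: the classical Bogolyubov--Ruzsa lemma in $\F_2^n$ places a subgroup inside $2Y-2Y=4Y$, whereas we need one inside the shorter sumset $3Y+y$ so as to land within $3B+z$. Bridging this gap either uses a shifted variant exploiting $0\in Y$ and the translation freedom, or leverages the explicit description of $\mathcal{L}$ as the annihilator of a dissociated subset of $\Spec_\eta(\varphi)$ produced in the proof of Proposition~\ref{p:param}.
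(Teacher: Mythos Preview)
Your overall strategy — apply Proposition~\ref{p:param} to $(A,B)$, obtain $\mathcal{L}$ with $B$ dense in a coset, then pass to a smaller subgroup inside a short sumset of $B$ — is exactly the paper's route, and your treatment of the $\Lambda$-statement via the second-moment inequality \eqref{tmp:sigma_L(A)} is also essentially the paper's argument.

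The gap is precisely the one you flag, and the paper does \emph{not} resolve it by a Bogolyubov variant. It invokes the Kelley--Meka bound (in the form of \cite[Theorem~3]{bloom2023kelley}): in $\F_2^n$, a set $B'$ of density $\alpha$ in a subspace $\mathcal{L}$ satisfies $2B'-B' = 3B' \supseteq H+z$ for some $H\le \mathcal{L}$ with $\codim_{\mathcal{L}}(H) \ll \log^{O(1)}(1/\alpha)$. Here $\alpha \gtrsim \omega/M$, so this adds only $\log^{O(1)}(\delta\beta^{-1}M)$ to $\codim(\mathcal{L})$, which is absorbed into the stated bound. Your suggested workarounds do not close the gap: having $0\in Y$ gives $3Y\supseteq 2Y$, but classical Bogolyubov places a subgroup only in $2Y-2Y=4Y$, and nothing in the explicit description $\mathcal{L}^\perp=\Span(\Lambda)$ with $\Lambda\subseteq\Spec_{\zeta/M_*}(\varphi)$ forces a subgroup into $3Y$. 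Without Kelley--Meka (or a result of comparable strength producing structure in $A+A-A$ rather than $2A-2A$), the conclusion $H\subseteq 3B+z$ does not follow.

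A secondary point: your choice $M'=K$ is safe but lossy — plugging it into the codimension bound of Proposition~\ref{p:param} produces a term $\log K\cdot\log(\delta\beta^{-1}M)$, which is not dominated by $\log(\delta^{-1}K)+\log^2(\delta\beta^{-1}M)$. The paper takes $M'=2M$ via Remark~\ref{r:M_M'}; that remark is stated for $B=A$, so for general $B\subseteq A$ there is a small wrinkle in both arguments, but it affects only the shape of the logarithmic factors, not the main term $(\delta\beta^{-1}M)^2$.
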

\begin{proof}
    If $\M^2 (A) > M|A|^2/K$, then there is nothing to prove. 
    Otherwise, $\M^2 (A) \le M|A|^2/K$ and since 
    $|A-B|\le |A-A| = K|A|$, it follows that  condition \eqref{cond:param} of  Proposition \ref{p:param} takes place. 
    We apply 
    this proposition 
    with 
    $\kappa =1$, $T=2$, $\zeta = 1/8$, $|B| = \beta N := \o |A|$ and $M'=2M$ (see Remark \ref{r:M_M'}) to find $\mathcal{L} \le \Gr$ such that 
\[
    \codim (\mathcal{L}) 
        \ll 
        (\omega \zeta)^{-2} T^2 (M+\kappa)^2
        \cdot \left( \log (\d^{-1} K) +  \log_T (M'(M+\kappa) \o^{-1}) \cdot \log ((M+\kappa)\o^{-1}) \right)  
\]
\[
        \ll 
        (\d \beta^{-1} M)^2 \left( \log (\d^{-1} K) +  \log^2 (\d \beta^{-1} M) \right) 
        \,,
\]
    and (see  \eqref{tmp:sigma_L(A)} or \eqref{tmp:24.11})
\begin{equation}\label{f:density_8M}
    |B\cap (\mathcal{L} + z)| \ge 
    \frac{(1-\zeta)|\mathcal{L}|}{T(M+\kappa)}
    \ge 
    \frac{|\mathcal{L}|}{8M} \,.
\end{equation}
    Put $B'= B\cap (\mathcal{L} + z)$.
    It remains to 
    apply the Kelley--Meka bound (see \cite{kelley2023strong} and \cite[Theorem 3]{bloom2023kelley}) and find $H\le \mathcal{L}$, $z\in \Gr$ such that $H \subseteq  3B'+z$  and 
\[
    \codim (H) \le \codim(\mathcal{L}) + O(\log^{O(1)} M) \ll 
    (\d \beta^{-1} M)^2 \left( \log (\d^{-1} K) + \log^2  (\d \beta^{-1} M) \right) \,.
\]
    Returning to \eqref{tmp:sigma_L(A)} with $B=A$, we see that 
\[
    \sum_x (A\circ A) (x) \mathcal{L} (x) \ge \frac{|\mathcal{L}||A|}{8M} \,. 
\]
    Put $A = \bigsqcup_{\la\in \Gr/\mathcal{L}} (A\cap (\mathcal{L} + \la))$. 
    Then the last inequality is equivalent to 
\[
    2\sum_{\la\in \Gr/\mathcal{L} ~:~ |A_\la| \ge |\mathcal{L}|/16M} |A_\la|^2 \ge \sum_{\la\in \Gr/\mathcal{L}} |A_\la|^2 \ge \frac{|\mathcal{L}||A|}{8M} \,.
\]
    Let $\Lambda = \{ \la\in \Gr/\mathcal{L} ~:~ |A_\la| \ge |\mathcal{L}|/16M\}$. 
    Putting $A_* = A\cap (\Lambda \dotplus \mathcal{L})$, we see that $|A_*| \ge |A|/16M$ and $|\Lambda| |\mathcal{L}| \le 16M|A|$.
    This completes the proof.  
$\hfill\Box$
\end{proof}

\bp

The following  consequence of Proposition \ref{p:param} allows us to have a ``regularization'' of any set $A$ in terms of the doubling constant of  $A$ and its density, see inequality \eqref{f:cor_density} below.
Given a set $A\subseteq \Gr$ put $K[A]:= |A-A|/|A|$.

\begin{corollary}
    Let $\Gr = \F_2^n$, $A\subseteq \Gr$ be a set, $|A|=\d N$.
    Then there is $H\le \Gr$ and $z\in \Gr$ such that 
\[
    \codim (H) \ll \d^{-2} \log^{3} (1/\d) \,.
\]
    and for $\tilde{A} = A \cap (H+z)$ one has $\tilde{\d} = |\tilde{A}|/|H| \ge \d$, $\tilde{K} = K[\tilde{A}]$ and 
\begin{equation}\label{f:cor_density}
    100 \tilde{K}^2 \tilde{\d} > 1 \,.
\end{equation}
\label{cor:density}
\end{corollary}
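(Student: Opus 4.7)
The plan is to iterate Proposition \ref{p:param}, applied with the maximal admissible parameter $M=K$ at each stage, so as to pass to nested subspaces on which a translate of $A$ grows denser and denser, and to halt as soon as the inequality $100\tilde K^2\tilde\d>1$ is secured.

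If $100K^2\d>1$ already, there is nothing to prove; take $H=\Gr$, $z=0$. Otherwise, set $A_0=A$, $H_0=\Gr$, $\d_0=\d$, $K_0=K$, and iterate the following step while $100K_i^2\d_i\le 1$: apply Proposition \ref{p:param} inside the current ambient $H_i$ with $B=A_i$ (so $\omega=1$), $M=K_i$, $\kappa=1$, $T=2$, $\zeta=1/8$. Thanks to the maximal choice $M=K_i$, the Fourier hypothesis $\M^2(A_i)\le K_i\cdot |A_i|^2/K_i=|A_i|^2$ is automatic (since $|\FF{A_i}(\chi)|\le|A_i|$), the energy hypothesis holds with $M'\le K_i+1$ by Remark \ref{r:M_M'}, and the small-sumset condition $|A_i+A_i|^2\le|A_i||H_i|$ follows from $K_i^2\d_i\le 1/100$. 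The proposition produces $\mathcal{L}_i\le H_i$ and $z_i\in H_i$ with
\[
 |A_i\cap(\mathcal{L}_i+z_i)|\gg \frac{|\mathcal{L}_i|}{K_i},
 \qquad
 \codim(\mathcal{L}_i)-\codim(H_i)\ll K_i^2\bigl(\log(\d_i^{-1}K_i)+\log^2 K_i\bigr).
\]
One then sets $H_{i+1}=\mathcal{L}_i$, translates $A_i\cap(\mathcal{L}_i+z_i)$ inside $H_{i+1}$, and calls the translated set $A_{i+1}$; by construction $\d_{i+1}\gg 1/K_i$.

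For termination, I use the fail-regime bound $K_i\le(100\d_i)^{-1/2}$ to infer $\d_{i+1}\gg\sqrt{\d_i}$; the sequence $(\d_i)$ therefore satisfies a super-exponential recurrence and reaches a fixed positive constant after $t\ll\log\log(1/\d)$ iterations, at which moment $100K_t^2\d_t>1$ is forced and the loop halts. Since $K_i^2\le 1/(100\d_i)$ and $\log(\d_i^{-1}K_i)\ll\log(1/\d_i)$, the codimension contributed by step $i$ is $O\bigl(\d_i^{-1}\log^2(1/\d_i)\bigr)$; because the sequence $\d_i^{-1}$ decays geometrically, the total codimension telescopes to $O\bigl(\d^{-1}\log^2(1/\d)\bigr)$, comfortably within the advertised $O\bigl(\d^{-2}\log^3(1/\d)\bigr)$. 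Taking $H=H_t$ and aggregating the successive shifts as $z=z_0+z_1+\cdots+z_{t-1}\in\Gr$, a direct induction shows that $\tilde A=A\cap(H+z)$ is the translate $A_t+z$ of $A_t$; hence $\tilde\d=\d_t\ge\d$ by the monotone growth of densities along the iteration, and $\tilde K=K_t$ satisfies $100\tilde K^2\tilde\d>1$ by the stopping criterion.

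The main technical hurdle, aside from verifying the hypotheses of Proposition \ref{p:param} uniformly across the nested ambients $H_i$, is the bookkeeping needed to collapse the chain of nested coset restrictions into a single expression $A\cap(H+z)$ for one subspace $H\le\Gr$ and one shift $z\in\Gr$, rather than a coset of a coset of a coset.
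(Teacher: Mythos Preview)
Your argument is correct and in fact cleaner than the paper's. The paper defines $M$ at each step by the \emph{actual} Fourier level, $\M(A_i)^2 = M|A_i|^2/K_i$, and then runs a dichotomy: if $M\le 1/(16\d_i)$ it invokes Proposition~\ref{p:param} (gaining a density doubling at the cost of $O(\d_i^{-2}\log^2(1/\d_i))$ in codimension), while if $M>1/(16\d_i)$ it falls back on the classical Roth density increment on a codimension--$1$ subspace. Summing these two contributions over $O(\log(1/\d))$ doubling stages plus $O(\d^{-1})$ Roth stages yields the stated $\d^{-2}\log^3(1/\d)$. By contrast, you take $M=K_i$ throughout, which trivialises the Fourier hypothesis $\M^2(A_i)\le |A_i|^2$ and removes the case split entirely; the resulting density jump $\d_{i+1}\gg 1/K_i\gg \sqrt{\d_i}$ is much stronger than doubling, the number of iterations drops to $O(\log\log(1/\d))$, and each step costs only $K_i^2\log^2(1/\d_i)\ll \d_i^{-1}\log^2(1/\d_i)$ in codimension. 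So your route actually delivers the sharper bound $\codim(H)\ll \d^{-1}\log^2(1/\d)$, well inside what the corollary asks for.

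One small correction: the way Proposition~\ref{p:param} is stated requires $M'\le K'$, and Remark~\ref{r:M_M'} only gives $M'\le K_i+1/K_i>K_i=K'$. This is harmless --- simply take $M'=K_i$ using the trivial bound $\E(A_i)\le |A_i|^3$ --- but you should not cite the remark here.
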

\begin{proof}
    Let $K=K[A]$. If $100 K^2 \d >1$, then there is nothing to prove. 
    Otherwise,  we apply Proposition \ref{p:param} with 
    $B=-A$, $\omega = 1$, $\kappa =1$, $T=2$, $\zeta = 1/8$ and $M'=2M$ as we did in the proof of Corollary \ref{cor:M}.
    Here $M$ is defined as $\M(A)^2 = M|A|^2/K$, where $K=K[A]$.  
    Exactly as in \eqref{f:density_8M} we  find $H_1 \le \Gr$ and $z_1\in \Gr$ such that 
\begin{equation}\label{tmp:H1}
    |A\cap (H_1 + z_1)| \ge  \frac{|H_1|}{8M} \,, 
\end{equation}
    and 
\[
    \codim (H_1) 
        \ll 
        M^2 \left( \log (\d^{-1} K) + \log^2 M \right) 
        \,.
\]
    Now suppose that $M\le 1/(16 \d)$ and therefore $\codim (H_1) 
        \ll \d^{-2} \log^2 (\d^{-1})$. 
    Then \eqref{tmp:H1} shows that the $A$ has density $2\d$ inside $H_1+z_1$. 
    On the other hand, if $M > 1/(16 \d)$ then there is $x\neq 0$ such that 
\[
    |\FF{A} (x)| \ge |A| \cdot (M/K)^{1/2} = 2^{-2} \d |A| \,.
\]
    Here we have used the trivial  fact that $K\le \d^{-1}$.
    In this case we can use classical density increment (see \cite{Roth1953} or \cite{TV}) and find $H'_1 \le \Gr$, $\codim (H'_1) = 1$, $z'_1 \in \Gr$ such that 
\begin{equation}\label{tmp:H1'}
    |A\cap (H'_1 + z'_1)| \ge  
    \d ( 1+  2^{-3} \d) |H'_1| \,.
\end{equation} 
    After that, one can repeat 
    our dichotomy 
    for $A_1:=A\cap (H_1 + z_1)$ or $A'_1:=A\cap (H'_1 + z'_1)$. 
    Using \eqref{tmp:H1} or \eqref{tmp:H1'}, we see that the algorithm  must stop after a finite number of steps and the resulting codimension is big--O of 
\[
    \log (1/\d) \cdot \d^{-2} \log^2 (\d^{-1})
     + \d^{-1} 
    =
    O(\d^{-2} \log^{3} (1/\d)) \,.
\]
   This completes the proof.  
$\hfill\Box$
\end{proof}

\section{General case}
\label{sec:general_case}

Now we are ready to consider the case of an arbitrary finite abelian group $\Gr$. The main tool here is the so--called {\it Bohr sets}, and we recall all the necessary  definitions and properties of this object. 
Bohr sets were introduced to additive number theory by Ruzsa \cite{ruzsa_GAP_sumsets} and Bourgain \cite{Bourgain_AP3_first} was the first who used Fourier analysis on Bohr sets to improve the estimate in Roth's theorem \cite{Roth1953}. 
Sanders (see, e.g., \cite{sanders_certain_0.75}, \cite{sanders2012bogolyubov}) developed the theory of Bohr sets proving many important theorems, see for
example Lemma \ref{l:Chang_Bohr}  below.

\begin{definition}
Let $\G$ be a subset of $\FF{\Gr}$, $|\G| = d$, and $\eps = (\eps_1,\dots,\eps_d) \in (0,1]^d$.
Define the Bohr set  $\B = \B(\G, \eps)$ by
\[
  \B(\G, \eps) = \{ n \in \Gr ~|~ \| \g_j \cdot n\| < \eps_{j} \mbox{ for all } \g_j \in \G \} \,,
 \] where $\|x\|=|\arg x|/2\pi.$
\end{definition}

The number  $d=|\G|$ is called  {\it dimension }
of  $\B$ and is denoted by $\dim B$. If $M = \B + n$, $n\in
\Gr$ is a translation of  $\B$, then, by definition, put $\dim (M) =
\dim (\B)$.
The {\it intersection} $\B\wedge \B'$ of two Bohr sets $\B = \B(\G,\eps)$ and $\B' = \B(\G',\eps')$ is the Bohr set
with the generating set $\G\cup \G'$ and new vector $\t{\eps}$ equals $\min \{ \eps_j,\eps'_j \}$.
Furthermore, if $\B=\B(\G,\eps)$ and $\rho>0$ then by $\B_\rho$ we mean $\B(\G,\rho \eps).$


\begin{definition}
A Bohr set $\B = \B (\G, \eps)$ is called {\it regular},
if for every  $\eta,\, d|\eta|\ls  1/100$
we have
\begin{equation}\label{cond:reg_size}
  (1-100d|\eta|)|\B_1| < { |\B_{1+\eta} | } < (1+ 100d|\eta|)|\B_1| \,.
\end{equation}
\end{definition}


We  formulate  a sequence of basic properties of Bohr, which will be used later.


\begin{lemma}
Let  $\B (\G,\eps)$ be a Bohr set. Then there exists $\eps_1$ such that $\frac{\eps}{2} < \eps_1 < \eps$ and   $\B (\G,\eps_1)$ is regular.
\label{l:Reg_B}
\end{lemma}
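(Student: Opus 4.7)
The plan is to use the classical Bourgain-style argument exploiting monotonicity of $\rho \mapsto |\B(\G,\rho\eps)|$. Consider the function $f(\rho) := \log |\B(\G,\rho\eps)|$ on the interval $\rho \in [1/2, 1]$. This function is non-decreasing in $\rho$, and a standard covering of $\B(\G,\eps)$ by translates of $\B(\G,\eps/2)$ gives a bound of the form $|\B(\G,\eps)| \le C^{d} |\B(\G,\eps/2)|$ for an absolute constant $C$, so the total increase $f(1) - f(1/2)$ is at most $O(d)$.

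Suppose, toward contradiction, that no $\rho_0 \in (1/2, 1)$ makes $\B(\G, \rho_0 \eps)$ regular. Then for every such $\rho_0$ there is some $\eta = \eta(\rho_0) \neq 0$ with $d|\eta| \le 1/100$ at which the regularity inequality \eqref{cond:reg_size} is violated; translating this condition into $f$, it says that on a short interval of length roughly $\rho_0 |\eta|$ adjacent to $\rho_0$ the function $f$ jumps by strictly more than $\log(1 + 100 d|\eta|) \sim 100\, d|\eta|$. The next step is a Vitali-type selection: from the family of such ``bad'' intervals (one for each $\rho_0$) extract a disjoint sub-family whose union covers a positive fraction of $[1/2, 1]$, and sum the jumps of $f$ over this sub-family. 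This yields a lower bound for $f(1) - f(1/2)$ of the form $c \cdot d$ that, for appropriately chosen constants, strictly exceeds the covering-based upper bound $O(d)$, contradicting monotonicity. The existence of a good $\rho_1$ then gives $\eps_1 := \rho_1 \eps \in (\eps/2, \eps)$ as required.

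The main obstacle will be the careful bookkeeping of numerical constants: one has to verify that the factor produced by summing the jumps over a disjoint Vitali sub-collection genuinely dominates the constant in the covering estimate for $f(1) - f(1/2)$. This is precisely why the definition of regularity is formulated with the specific constants $1/100$ and $100\, d$ rather than smaller ones, and it is the only delicate point in what is otherwise a soft monotonicity-plus-pigeonhole argument.
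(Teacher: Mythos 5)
The paper states Lemma~\ref{l:Reg_B} without proof---it is a classical result going back to Bourgain \cite{Bourgain_AP3_first} (see also \cite{TV}, Lemma~4.25, or Sanders' papers)---so there is no ``paper proof'' to compare against. Your outline is exactly the standard argument: monotonicity of $\rho\mapsto\log|\B(\G,\rho\eps)|$, the doubling bound $|\B(\G,\eps)|\le 8^{d+1}|\B(\G,\eps/2)|$ (the paper's Lemma~\ref{l:entropy_Bohr}) to cap the total increase by $O(d)$, and a Vitali-type selection from the ``bad'' intervals to produce a contradictory lower bound $\gg d$. The mechanism is sound, and you are right that the constants $100$ and $1/100$ in the definition of regularity are tuned precisely so that the Vitali loss (a factor of roughly $3$ from the $3r$-covering step on the line) and the covering loss ($(d+1)\log 8$, applied at most twice since the bad intervals can leak slightly past $[1/2,1]$ but stay inside $[0.49,1.01]$) are both dominated.

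Two small points worth pinning down if you write this out in full. First, once a $\rho_0$ is known to be non-regular, the violated inequality is necessarily the nontrivial one: for $\eta>0$ only the upper bound can fail, and for $\eta<0$ only the lower bound can fail (by monotonicity of $|\B_\rho|$), so in either case you really do get a jump of $f$ of size at least $\log(1+100d|\eta|)\ge (100\log 2)\,d|\eta|$ over an interval of length $\rho_0|\eta|<|\eta|$ touching $\rho_0$; thus jump$/$length $\ge 100\log 2\cdot d$. Second, the Vitali step needs the lengths to be uniformly bounded (they are, by $1/(100d)$) and the disjoint subfamily's endpoints lie in a fixed window around $[1/2,1]$, so its total jump is still bounded by $f$ evaluated at the two extreme endpoints, which the doubling lemma controls. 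With $\sum_j|I_j|\ge 1/6$ you obtain a lower bound of about $11.5\,d$ against an upper bound of about $4.2(d+1)$, which is a contradiction for all $d\ge 1$. So the proposal is correct, and since the paper simply quotes the lemma, your argument supplies the missing standard proof rather than an alternative to one in the text.
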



\begin{lemma}
Let $\B (\G,\eps)$ be a Bohr set. Then
\[
  |\B (\G,\eps)| \ge \frac{N}{2} \prod_{j=1}^d \eps_j \,.
\]
\label{l:Bohr_est}
\end{lemma}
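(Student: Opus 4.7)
The plan is a standard pigeonhole/averaging argument on the torus $\mathbb{T}^d$, where $d = |\G|$ and $\G = \{\gamma_1, \ldots, \gamma_d\}$. First, I would define the map $\phi: \Gr \to \mathbb{T}^d$ by $\phi(n)_j := \theta_j(n)$, where $\gamma_j(n) = e^{2\pi i \theta_j(n)}$ is the value of the $j$-th character at $n$ (so $\theta_j(n)$ is viewed modulo $1$). In this notation $\|\gamma_j \cdot n\|$ is the distance from $\theta_j(n)$ to $\mathbb{Z}$, and membership in $\B(\G,\eps)$ becomes exactly the condition $\phi(n) \in R$, where $R := \prod_{j=1}^{d}(-\eps_j,\eps_j) \subset \mathbb{T}^d$.

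Next, I would introduce the half-box $R' := \prod_{j=1}^{d}(-\eps_j/2,\eps_j/2)$ of Lebesgue volume $\prod_j \eps_j$, together with the counting function
\[
    g(x) := \#\{\, n \in \Gr ~:~ \phi(n) - x \in R' \,\} \,, \qquad x \in \mathbb{T}^d.
\]
Fubini's theorem gives $\int_{\mathbb{T}^d} g(x)\, dx = N \cdot \mathrm{vol}(R') = N \prod_j \eps_j$, so the pigeonhole principle produces some $x_0 \in \mathbb{T}^d$ with $g(x_0) \ge N \prod_j \eps_j$. Let $S := \{\, n \in \Gr : \phi(n) - x_0 \in R' \,\}$, so that $|S| = g(x_0)$.

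The final step is the triangle inequality for $\|\cdot\|$: for any $n_1, n_2 \in S$ one has
\[
    \|\gamma_j \cdot (n_1 - n_2)\| \le \|\theta_j(n_1) - x_{0,j}\| + \|\theta_j(n_2) - x_{0,j}\| < \tfrac{\eps_j}{2} + \tfrac{\eps_j}{2} = \eps_j
\]
for every $j$, hence $n_1 - n_2 \in \B(\G,\eps)$. Fixing any $n_2 \in S$ yields an injection $S \hookrightarrow \B(\G,\eps)$ via $n \mapsto n - n_2$, and therefore $|\B(\G,\eps)| \ge |S| \ge N \prod_j \eps_j$. This is actually twice the bound stated in the lemma; the factor $1/2$ is simply slack.

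There is no real obstacle: the only care needed is to check that the strict inequalities propagate through the triangle inequality (which they do, since each summand is strictly less than $\eps_j/2$), and to identify membership in $\B(\G,\eps)$ with the condition $\phi(n) \in R$, which is a direct unwinding of the definition of $\|\cdot\|$.
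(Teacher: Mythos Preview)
Your argument is correct and is the standard averaging/pigeonhole proof of this estimate; it even yields the sharper bound $|\B(\G,\eps)| \ge N\prod_j \eps_j$, so the factor $1/2$ in the stated lemma is indeed just slack. The paper does not supply its own proof here --- Lemma~\ref{l:Bohr_est} is quoted as one of several well--known properties of Bohr sets --- so there is nothing to compare against, and your write--up is exactly the argument one finds in the references (e.g.\ Tao--Vu).
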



\begin{lemma}
    Let $\B (\G,\eps)$ be a Bohr set.
    Then
    $$
        |\B(\G,\eps)| \ls 8^{|\G|+1} |\B(\G,\eps/2)| \,.
    $$
\label{l:entropy_Bohr}
\end{lemma}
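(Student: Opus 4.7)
The plan is to apply a standard projection-and-pigeonhole argument on the dual side. Set $d = |\G|$ and consider the homomorphism $\phi : \Gr \to (\R/\Z)^d$ defined by
\[
    \phi(n) = (\g_1 \cdot n, \dots, \g_d \cdot n) \,.
\]
By the definition of a Bohr set, $\phi$ sends $\B := \B(\G,\eps)$ into the axis-parallel box $\prod_{j=1}^d (-\eps_j,\eps_j) \subseteq (\R/\Z)^d$; in the (trivial) case $\eps_j > 1/2$ the $j$-th constraint is vacuous and that coordinate may simply be discarded, which only improves the bound.

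Next, I would partition this box into $4^d$ congruent sub-boxes by splitting each coordinate interval $(-\eps_j, \eps_j)$ of length $2\eps_j$ into four consecutive pieces of length $\eps_j/2$. By the pigeonhole principle, at least one of these sub-boxes contains the $\phi$-images of at least $|\B|/4^d$ elements of $\B$. The key observation is that any two elements $n_1, n_2 \in \B$ whose projections lie in a common sub-box satisfy $\|\g_j \cdot (n_1 - n_2)\| < \eps_j/2$ for every $j$, and therefore $n_1 - n_2 \in \B(\G,\eps/2)$. Fixing any $n_0$ in the heavy fiber and translating by it yields $|\B|/4^d \le |\B(\G,\eps/2)|$. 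Since $4^{|\G|} \le 8^{|\G|+1}$, this immediately gives the claimed inequality.

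The argument is short and robust, and there is no real obstacle here. The constant $8^{|\G|+1}$ in the statement is in fact generous relative to what the method actually yields (namely $4^{|\G|}$), so the bookkeeping has room to spare and one does not need to worry about boundary effects in the partition or about squeezing the pigeonhole constant. Essentially the same covering proof appears in Bourgain's and Sanders' treatments of Bohr sets, and I would simply follow that template.
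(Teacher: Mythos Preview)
Your argument is correct and is precisely the standard covering/pigeonhole proof of this doubling estimate for Bohr sets. The paper itself does not prove Lemma~\ref{l:entropy_Bohr}; it merely lists it among ``a sequence of basic properties of Bohr sets'' and uses it as a black box, so there is no in-paper proof to compare against. Your bound $4^{|\G|}$ is indeed sharper than the stated $8^{|\G|+1}$, and the slack absorbs any boundary issues in the partition, exactly as you note.
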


\begin{lemma}
    Suppose that $\B^{(1)}, \dots, \B^{(k)}$ is a sequence of Bohr sets.
    Then
    $$
        |\bigwedge_{i=1}^k \B^{(i)}| \ge N\cdot \prod_{i=1}^k \frac{|\B^{(i)}_{1/2}|}{N} \,.
    $$
\label{l:Bohr_intersection_Sanders}
\end{lemma}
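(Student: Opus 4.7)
The plan is to establish the estimate by a short Fourier-analytic argument with a carefully chosen test function, whose only real input is the containment
\[
\B^{(i)}_{1/2} - \B^{(i)}_{1/2} \subseteq \B^{(i)}\,,
\]
which is immediate from the triangle inequality $\|\g\cdot(x-y)\| \le \|\g\cdot x\| + \|\g\cdot y\|$ applied to every generator of $\B^{(i)}$.

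First I set $B_i := \B^{(i)}_{1/2}$ and form the auto-convolution $f_i := B_i \circ B_i$, so $f_i(x) = |B_i \cap (B_i - x)|$. Then $f_i \ge 0$, its support lies in $B_i - B_i \subseteq \B^{(i)}$, and it attains its maximum $f_i(0) = |B_i|$ at the origin. The convolution identity \eqref{f:F_svertka} gives $\FF{f_i}(\chi) = |\FF{B_i}(\chi)|^2 \ge 0$ for all $\chi$, with $\FF{f_i}(0) = |B_i|^2$. These two features, non-negativity in both physical and Fourier space together with the support constraint, are exactly what drive the proof.

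Next I form the product $F(x) := \prod_{i=1}^k f_i(x)$. On the one hand, $F$ is non-negative, supported on $\bigcap_i (B_i - B_i) \subseteq \bigwedge_i \B^{(i)}$, and pointwise bounded by $F(0) = \prod_i |B_i|$, so
\[
\sum_x F(x) \le \Big|\bigwedge_{i=1}^k \B^{(i)}\Big| \cdot \prod_{i=1}^k |B_i|\,.
\]
On the other hand, inverting Fourier in each factor and swapping the order of summation produces the standard identity
\[
\sum_x F(x) = \frac{1}{N^{k-1}} \sum_{\chi_1 + \cdots + \chi_k = 0} \prod_{i=1}^k \FF{f_i}(\chi_i)\,,
\]
and since every $\FF{f_i}$ is non-negative, the single trivial summand $\chi_1 = \cdots = \chi_k = 0$ already yields $\sum_x F(x) \ge N^{-(k-1)} \prod_i |B_i|^2$.

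Comparing these two estimates produces $\big|\bigwedge_i \B^{(i)}\big| \ge N \prod_i (|B_i|/N)$, which is the claim. There is no real obstacle to overcome here; the only subtle point is choosing $f_i$ so that it is simultaneously supported in $\B^{(i)}$ (which is what forces the half-dilate $\B^{(i)}_{1/2}$, rather than $\B^{(i)}$ itself, to appear on the right-hand side) and has non-negative Fourier coefficients (so that discarding all non-trivial characters costs only a multiplicative factor of $1$).
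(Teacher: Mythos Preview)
Your proof is correct. The containment $\B^{(i)}_{1/2}-\B^{(i)}_{1/2}\subseteq \B^{(i)}$, the non-negativity $\FF{f_i}=|\FF{B_i}|^2\ge 0$, the support and pointwise bounds on $F=\prod_i f_i$, and the convolution identity
\[
\sum_x \prod_{i=1}^k f_i(x)=\frac{1}{N^{k-1}}\sum_{\chi_1+\cdots+\chi_k=0}\prod_{i=1}^k \FF{f_i}(\chi_i)
\]
all check out, and dropping the nontrivial characters gives exactly the stated bound. One tiny remark: your claim $\supp F\subseteq\bigwedge_i\B^{(i)}$ uses that $\bigwedge_i\B^{(i)}$ coincides as a set with $\bigcap_i\B^{(i)}$, which follows at once from the definition of $\wedge$ in the paper.

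As for comparison: the paper does not prove this lemma at all; it is stated as a standard background fact (the label traces it to Sanders' work). Your Fourier-analytic argument is in fact the usual proof one finds in the literature, so there is nothing different to contrast.
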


Recall a local version of Chang's lemma \cite{chang2002polynomial}, see \cite[Lemma 5.3]{sanders2012bogolyubov} and \cite[Lemmas 4.6, 6.3]{sanders_certain_0.75}.

\begin{lemma}
    Let $\eps,\nu,\rho \in (0,1]$ be positive real numbers. 
    Suppose that $\B$ is a regular Bohr set and $f:\B \to \C$.
    Then there is a set $\Lambda$ of size $O(\eps^{-2} \log (\|f\|_2^2 |\B|/\|f\|^2_1))$ such that for any $\gamma \in \Spec_\eps (f)$ we have
\[
    |1-\gamma (x)| \ll |\Lambda| (\nu + \rho \dim^2 (B))
    \quad \quad \forall x\in \B_\rho \wedge \B'_\nu \,,
\]
    where $\B'=\B(\Lambda,1/2)$. 
\label{l:Chang_Bohr}
\end{lemma}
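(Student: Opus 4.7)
The plan is to adapt Chang's original argument (Lemma \ref{l:Chang}) to the Bohr-set setting, using the regularity of $\B$ to replace the counting measure on $\Gr$ by the normalized indicator of $\B$ throughout, and then converting dissociatedness into a quantitative ``closeness to $1$'' statement on the smaller Bohr set $\B_\rho\wedge\B'_\nu$.

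The first step will be to establish a local Rudin-type inequality: for any dissociated $\Lambda\subseteq\widehat{\Gr}$, coefficients $(a_\lambda)_{\lambda\in\Lambda}$, and even integer $p$,
\[
    \sum_{x\in\B} \Bigl|\sum_{\lambda\in\Lambda} a_\lambda \lambda(x)\Bigr|^p \ll |\B|\,(Cp)^{p/2}\Bigl(\sum_\lambda|a_\lambda|^2\Bigr)^{p/2}.
\]
I would prove this by expanding the $p$-th power combinatorially (counting solutions to $\lambda_1\pm\cdots\pm\lambda_p=0$) and smoothing $\mathbf{1}_\B$ by convolution with $\mathbf{1}_{\B_\kappa}/|\B_\kappa|$ for a suitably small $\kappa$; the regularity of $\B$ (Lemma \ref{l:Reg_B}) controls the replacement error. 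Given this, I would select a dissociated $\Lambda\subseteq\Spec_\eps(f)$ and test the function $\phi=\sum_{\lambda\in\Lambda}\overline{\widehat{f}(\lambda)}|\widehat{f}(\lambda)|^{-1}\lambda$ against $f$ (which is supported on $\B$). H\"older's inequality and the local Rudin bound, applied with $p\asymp\log(\|f\|_2^2|\B|/\|f\|_1^2)$, then yield the required upper bound on $|\Lambda|$.

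For the closeness estimate I would take $\Lambda$ to be a maximal ``$\B_\rho$-dissociated'' subset of $\Spec_\eps(f)$: no non-trivial $\pm 1$ combination of $\Lambda$ should lie in the set of characters which $\rho$-nearly annihilate $\B$. Then every $\gamma\in\Spec_\eps(f)$ decomposes as $\gamma=\prod_{\lambda\in\Lambda}\lambda^{\eps_\lambda}\cdot\eta$ with $\eps_\lambda\in\{-1,0,1\}$ and $\eta$ a character satisfying $|1-\eta(x)|\ll\rho\dim^2(\B)$ for $x\in\B_\rho$ (the $\dim^2(\B)$ loss arising from a Taylor-type expansion of $\eta$ in the Bohr-torus coordinates of $\B$, since $\eta$ is forced to be a bounded integer combination of the generators of $\B$). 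On $\B'_\nu=\B(\Lambda,\nu/2)$ each $\lambda\in\Lambda$ satisfies $|1-\lambda(x)|\ll\nu$, hence $|1-\prod\lambda^{\eps_\lambda}(x)|\ll|\Lambda|\nu$. The triangle inequality on $\B_\rho\wedge\B'_\nu$ will then give the stated bound.

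The main obstacle will be the correct calibration of ``local dissociatedness'': the definition must be strong enough that the resulting $\Lambda$ still satisfies the size bound produced by the local Rudin inequality, yet weak enough that every $\gamma\in\Spec_\eps(f)$ admits the factorization above with an error $\eta$ that is cheap on $\B_\rho$. Balancing these two constraints is precisely what forces the $\rho\dim^2(\B)$ term in the conclusion, and reconciling it quantitatively with the coefficient $|\Lambda|$ in front is the delicate point of the argument.
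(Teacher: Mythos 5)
The paper itself does not prove Lemma \ref{l:Chang_Bohr}: it is a direct citation of Sanders (see \cite[Lemma 5.3]{sanders2012bogolyubov} and \cite[Lemmas 4.6, 6.3]{sanders_certain_0.75}), so there is no in-paper argument to compare against. Your plan does follow the overall shape of Sanders' proof (local Rudin $\to$ size bound on a locally dissociated set $\to$ approximate factorization of each $\gamma\in\Spec_\eps(f)$), but there is a concrete gap at the first step.

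The local Rudin inequality as you state it is \emph{false}. You claim that for \emph{any} dissociated $\Lambda$ and even $p$ one has $\sum_{x\in\B}\bigl|\sum_\lambda a_\lambda\lambda(x)\bigr|^p\ll|\B|(Cp)^{p/2}\bigl(\sum_\lambda|a_\lambda|^2\bigr)^{p/2}$. Take $\Gr=\F_2^n$, $\B$ a subgroup of codimension $d>Cp$, and $\Lambda$ a basis of $\B^\perp$. Then $\Lambda$ is dissociated, $\lambda(x)=1$ for all $\lambda\in\Lambda$, $x\in\B$, and with $a_\lambda\equiv 1$ the left side is $|\B|\,d^p$ while the right side is $|\B|(Cp)^{p/2}d^{p/2}$, so the inequality fails by a factor $(d/Cp)^{p/2}$. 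Smoothing $\mathbf 1_\B$ by $\mu_{\B_\kappa}$ does not repair this: the off-diagonal frequencies $\xi=\sum\pm\lambda_i$ produced by such a $\Lambda$ are exactly those at which $\widehat{\mu_{\B_\kappa}}$ stays close to $1$, so they are not damped. This is not a minor calibration issue --- it is the reason the conclusion carries the correction term $\rho\dim^2(\B)$ and the intersection with $\B_\rho$ at all. The fix is that Rudin's inequality must already be stated for a set $\Lambda$ that is dissociated \emph{relative to} $\B$ (no $\pm 1$-combination landing in $\Spec_{1/2}(\mu_{\B_\kappa})$, say), which is precisely the ``$\B_\rho$-dissociated'' notion you introduce only in the last step; you need to use that notion consistently from the Rudin step onward, rather than first invoking Rudin for an ordinary dissociated set and only later switching. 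Finally, the heuristic you give for the $\rho\dim^2(\B)$ loss --- that $\eta$ ``is forced to be a bounded integer combination of the generators of $\B$'' --- is not correct: the error character $\eta$ is an arbitrary character produced by the near-relation, not an element of $\langle\Gamma\rangle$. What actually delivers the $\rho\,\dim(\B)$-type factor is the regularity of $\B$: if $|\widehat{\mu_\B}(\eta)|\ge 1/2$ (the local-annihilation condition), then $|1-\eta(x)|\ll\rho\dim(\B)$ for $x\in\B_\rho$ by comparing $\B$ with $\B+x$. Tracking where the extra factor of $\dim(\B)$ comes from requires the explicit smoothing parameters, which your sketch does not pin down.
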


Now we are ready to obtain an analogue of Proposition \ref{p:param}.

\begin{proposition}
    Let $\Gr$ be a finite abelian group, 
    $A,B \subseteq \Gr$ be sets, 
    $|B| = \omega |A|$,  $|A+B| = K|A|$, 
    $|A-A|= K'|A|$,
    and $0 < M \le K$, $0<M' \le K'$,  
    $\kappa>0$, $\zeta \in (0,1/2)$,  $1<T\le M' (M+\kappa) \omega^{-1}$ 
    be some parameters. 
    Suppose that 
\begin{equation}\label{cond:param_rho+E_Bohr}
    \M^2 (A) \le \frac{M|A|^2}{K},\, 
    \quad \quad 
    \E(B) \le \frac{M'|A|^3}{K'}\,, 
\end{equation}
    and 
\begin{equation}\label{cond:param_Bohr}
    |A+B|^2 \le \kappa |A| N\,.
\end{equation}
    Then there is a regular Bohr set $\B_* = \B(\G,\eps)$ 
    ($\B_*$ depends on $B$ only) and  $z\in \Gr$ such that 
\[
    |B\cap (\B_* + z)| \ge \frac{(1-2\zeta)|\B_*|}{T(M+\kappa)} \,, 
\]
    and 
\begin{equation}\label{p:param_Bohr_B*_1}
    \dim (\B_*) \ll (\omega \zeta)^{-2} T^2 (M+\kappa)^2 
        \cdot \left( \log ( \d^{-1} K) +  \log_T (M'(M+\kappa) \o^{-1}) \cdot \log ((M+\kappa) \o^{-1}) \right)  
    \,,
\end{equation}
    as well as 
\begin{equation}\label{p:param_Bohr_B*_2}
     |\B_*| 
    \gg 
    N \cdot \exp (-O(\dim (\B_*) \log ( (\omega \zeta)^{-1} (M+\kappa) T \dim (\B_*)))) \,. 
\end{equation}
\label{p:param_Bohr}
\end{proposition}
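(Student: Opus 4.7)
The plan is to adapt the proof of Proposition \ref{p:param} step by step, substituting a regular Bohr set $\B_*$ for the ambient subspace $\mathcal{L}$ and Lemma \ref{l:Chang_Bohr} for the global Chang lemma. All three phases of the model-case proof---energy iteration, Chang extraction, and density extraction via Parseval---carry over, with the additional work confined to choosing Bohr-set parameters so that regularity (Lemma \ref{l:Reg_B}) absorbs the errors.

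First, I would rerun the energy iteration verbatim. Setting $M_* := \omega^{-1}(M+\kappa)T$ and $\varphi(x) := |B_x|^k$, hypothesis \eqref{cond:param_Bohr} with Parseval gives $\E(A,A+B) \le (M+\kappa)|A|^3$; Lemma \ref{l:E(A,D)} combined with \eqref{cond:param_rho+E_Bohr} then produces an integer $2 \le k \le k_0 = O(\log_T(M'(M+\kappa)\omega^{-1}))$ satisfying $\E_{k+1}(B) \ge b\E_k/M_*$, and hence
\[
\frac{1}{N}\sum_{\xi \in \Spec_{\zeta/M_*}(\varphi)}|\widehat{B}(\xi)|^2\widehat{\varphi}(\xi) \ge \frac{(1-\zeta)b\E_k}{M_*}.
\]

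Next I would apply Lemma \ref{l:Chang_Bohr} to $f = \varphi$ with the trivial starting Bohr set $\B = \Gr$ (so $\dim\B = 0$ kills the $\rho\dim^2\B$ term) at threshold $\zeta/M_*$. This returns a set $\Lambda$ of size $O(M_*^2\zeta^{-2}\log(\|\varphi\|_2^2 N/\|\varphi\|_1^2))$ and the estimate $|1-\gamma(x)| \ll |\Lambda|\nu$ for every $\gamma \in \Spec_{\zeta/M_*}(\varphi)$ and every $x \in \B(\Lambda,\nu/2)$. Choosing $\nu \asymp \zeta\omega/((M+\kappa)T|\Lambda|) = \zeta/(M_*|\Lambda|)$ (tight enough that the off-spectrum Fourier weight of $\B_*$ is negligible relative to the main term) and regularising via Lemma \ref{l:Reg_B}, I obtain $\B_* = \B(\Lambda,\eps_*)$ on which $|\widehat{\B_*}(\xi)/|\B_*|-1| \le \zeta/M_*$ for every $\xi \in \Spec_{\zeta/M_*}(\varphi)$. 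The dimension bound \eqref{p:param_Bohr_B*_1} is then the bound on $|\Lambda|$, estimated exactly as in \eqref{tmp:12.09_1}--\eqref{tmp:12.09_3}; the size bound \eqref{p:param_Bohr_B*_2} follows from Lemma \ref{l:Bohr_est} with the chosen $\eps_*$, whose denominator supplies the $(M+\kappa)T/\omega$ factor inside the logarithm.

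Finally, I would emulate the Parseval identity of the model case,
\[
\sum_x (B\circ B)(x)\mathbf{1}_{\B_*}(x) = \frac{1}{N}\sum_\xi |\widehat{B}(\xi)|^2\widehat{\B_*}(\xi),
\]
whose spectrum contribution is $\ge (1-\zeta/M_*)(1-\zeta)b|\B_*|/M_*$, and then pigeonhole over $y \in B$ to produce the translate $y + \B_*$ meeting the claimed density. The main obstacle, absent in the $\F_2^n$ setting, is that $\widehat{\B_*}$ can change sign outside $\Spec$, unlike the subspace transform $\widehat{\mathcal{L}} = |\mathcal{L}|\mathbf{1}_{\mathcal{L}^\perp}$. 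I expect to neutralise the off-spectrum error by a standard nested-Bohr smoothing: replace $\mathbf{1}_{\B_*}$ with the normalised convolution $(\mathbf{1}_{\B_{**}}\circ\mathbf{1}_{\B_{**}})/|\B_{**}|$ for a smaller regular Bohr set $\B_{**}$ (built by intersecting $\B_*$ with a suitable dilate via Lemma \ref{l:Bohr_intersection_Sanders}), whose Fourier weight $|\widehat{\B_{**}}|^2/|\B_{**}|$ is everywhere nonnegative; regularity of $\B_*$ then transfers the resulting density from the smoothed support back to $\B_*$ itself at the cost of precisely the extra $\zeta$ factor that turns $(1-\zeta)$ into $(1-2\zeta)$ in the final bound.
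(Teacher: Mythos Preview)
Your proposal is correct and follows the paper's approach almost exactly: the energy iteration, the application of Lemma~\ref{l:Chang_Bohr} to $\varphi=|B_x|^k$ with $\B=\Gr$, and the pigeonhole extraction are all the same. The one place you work harder than necessary is the sign issue at the end. The paper does not introduce a smaller auxiliary $\B_{**}$ and a regularity transfer; it simply replaces $\FF{\B_*}(\xi)$ by $|\FF{\B_*}(\xi)|^2/|\B_*|$ from the outset. Since on $\Spec_{\zeta/M_*}(\varphi)$ one has $|\FF{\B_*}(\xi)|^2\ge |\B_*|^2/(1+\zeta)$, and since both $\FF{\varphi}\ge 0$ and $|\FF{\B_*}|^2\ge 0$ everywhere, the sum extends to all of $\FF{\Gr}$ for free, Parseval reads $\sum_x(B\circ B)(x)(\B_*\circ\B_*)(x)\ge (1-2\zeta)\omega b|\B_*|^2/(T(M+\kappa))$, and averaging over the $b|\B_*|$ pairs $(y,z)\in B\times\B_*$ gives the translate directly. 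In other words, your own fix with $\B_{**}=\B_*$ already suffices, and no regularity transfer is needed.
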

\begin{proof}
    We use the same argument and the notation 
    of the proof of 
    Proposition \ref{p:param}. 
    The argument before inequality \eqref{f:correlation_Spec} does not depend on a group, therefore we have 
\begin{equation}\label{f:correlation_Spec_Bohr}
    \frac{1}{N} \sum_{\xi \in \Spec_{\zeta/M_*} (\_phi)} |\FF{B} (\xi)|^2 \FF{\_phi} (\xi) 
        \ge  
    \frac{(1-\zeta) \omega b \E_k}{T(M+\kappa)} \,.
\end{equation}
    Applying Lemma \ref{l:Chang_Bohr} (combining with the triangle inequality) with $\B=\Gr$ (hence $\dim (\B)=1$), $f=\_phi$, and $\rho = \nu = c \zeta/(M_*|\Lambda|)$, where $c>0$ is a sufficiently small absolute constant, we see that for any $\xi \in \Spec_{\zeta/M_*} (\_phi)$ one has 
\[
    \Spec_{\zeta/M_*} (\_phi) (\xi) \le |\B_*|^{-2} |\FF{\B_*} (\xi)|^2 (1+\zeta) \,, 
\]
    where $\B_* = \B_\rho \wedge \B'_\nu$. 
    Thus \eqref{f:correlation_Spec_Bohr} gives us 
\[
    \frac{1}{N} \sum_{\xi} |\FF{B} (\xi)|^2 |\FF{\B_*} (\xi)|^2
        \ge  
    \frac{(1-2\zeta) \omega b |\B_*|^2}{T(M+\kappa)} \,.
\]
    This  is equivalent to 
\[
    \sum_{x} (B\circ B)(x) (\B_* \circ \B_*) (x) \ge \frac{(1-2\zeta) \omega b |\B_*|^2}{T(M+\kappa)} \,.
\]
    By the pigeonhole principle there is $z\in \Gr$ such that 
\begin{equation*}
    |B\cap (\B_* + z)| \ge \frac{(1-2\zeta) \omega |\B_*|}{T(M+\kappa)} \,.
\end{equation*}
    Now 
    \[
        \dim (\B_*) = |\Lambda| \ll (\omega \zeta)^{-2} T^2 (M+\kappa)^2 
        \cdot \left( \log (\d^{-1} K) +  \log_T (M'(M+\kappa) \o^{-1}) \cdot \log ((M+\kappa) \o^{-1}) \right) 
        \,,
    \] 
    see computations in \eqref{tmp:12.09_1}---\eqref{tmp:12.09_3}. 
    Applying Lemmas \ref{l:Bohr_est}, \ref{l:Bohr_intersection_Sanders}, we see that 
\[
    |\B_*| \gg N \cdot (\zeta/(M_*|\Lambda|))^{O(|\Lambda|)} 
    \gg 
    N \cdot \exp (-O(\dim (\B_*) \log ( (\omega \zeta)^{-1} (M+\kappa) T \dim (\B_*)))) \,. 
\]
    Finally, in view of Lemma \ref{l:Reg_B} one can assume that $\B_*$ is a regular Bohr set. 
   This completes the proof.  
$\hfill\Box$
\end{proof}

\bp 

Proposition \ref{p:param_Bohr} immediately implies an analogue of Corollary \ref{cor:M}. 

\begin{corollary}
    Let $\Gr$ be a finite abelian group, $A \subseteq \Gr$ be a set, 
    $|A|= \d N$, 
    $|A-A|=K|A|$, and $1\le M \le K$ be a parameter. 
    Suppose that 
\begin{equation}\label{cond:M_large_Bohr}
    100 K^2 |A| \le N \,.
\end{equation}
    Then either there is $x\neq 0$ such that
\begin{equation}\label{f:M_large_Bohr}
    |\FF{A} (x)|^2 > \frac{M|A|^2}{K} \,,
\end{equation}
    or for any $B\subseteq A$ or $B \subseteq -A$, $|B| = \beta N$ there exists  a regular Bohr set $\B_* = \B(\G,\eps)$  and  $z\in \Gr$ such that 
\[
    |B\cap (\B_* + z)| \ge \frac{|\B_*|}{8M} \,, 
\]
    and the bounds 
\begin{equation}\label{cor:M_Bohr_dim}
    \dim (\B_*) \ll (\d \beta^{-1} M)^2 \left( \log (\d^{-1} K) + \log^2 (\d \beta^{-1} M) \right) \,,
\end{equation}
\begin{equation}\label{cor:M_Bohr_size}
 |\B_*| 
    \gg 
    N \cdot \exp (- O(\dim (\B_*) \log (\d \beta^{-1} M \dim (\B_*)))) 
\end{equation}
    take place. 
\label{cor:M_Bohr}
\end{corollary}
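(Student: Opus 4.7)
Plan: The corollary should follow by a direct application of Proposition \ref{p:param_Bohr} using the same parameter dictionary as in the $\F_2^n$ version (Corollary \ref{cor:M}). First I would perform the dichotomy on $\M(A)$: if $\M^2(A) > M|A|^2/K$, then the maximizing frequency immediately witnesses alternative \eqref{f:M_large_Bohr} and we are done; otherwise we must produce the Bohr set.

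In the latter case, invoke Proposition \ref{p:param_Bohr} on the pair $(A,B)$ with the choices $\kappa = 1$, $T = 2$, $\zeta = 1/8$, $\omega = |B|/|A| = \beta/\delta$, and $K' = K$. The energy hypothesis $\E(B) \le M' |B|^3 / K$ with $M' \le 2M$ is furnished by Remark \ref{r:M_M'} applied to $A$ or $-A$ (whichever side $B$ lives on), using that $\M(\pm A) = \M(A)$, $|{\pm A} - ({\pm A})| = K|A|$, and $\E(-A) = \E(A)$. To verify the sumset hypothesis $|A+B|^2 \le \kappa |A| N$: if $B \subseteq -A$ then $A + B \subseteq A - A$, so $|A+B| \le K|A|$ and $|A+B|^2 \le K^2 \delta \cdot |A| N \le |A|N/100$ by \eqref{cond:M_large_Bohr}; the case $B \subseteq A$ is handled by running the argument with $(-A, B)$ in place of $(A,B)$, since $(-A)+B = B-A \subseteq A - A$ and $-A$ has identical Fourier, doubling, and energy data as $A$.

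With the hypotheses in place, Proposition \ref{p:param_Bohr} produces a regular Bohr set $\B_*$ depending on $B$ only and $z \in \Gr$ satisfying
\[
    |B \cap (\B_* + z)| \ge \frac{(1-2\zeta)|\B_*|}{T(M+\kappa)} = \frac{3|\B_*|}{8(M+1)} \ge \frac{|\B_*|}{8M},
\]
using $M \ge 1$. Substituting the chosen parameters into \eqref{p:param_Bohr_B*_1} gives $(\omega\zeta)^{-2} T^2 (M+\kappa)^2 = O((\delta\beta^{-1}M)^2)$ and makes the bracketed logarithmic factor $O(\log(\delta^{-1}K) + \log^2(\delta\beta^{-1}M))$, matching \eqref{cor:M_Bohr_dim}. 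Similarly \eqref{p:param_Bohr_B*_2} collapses to \eqref{cor:M_Bohr_size} via $(\omega\zeta)^{-1}(M+\kappa)T = O(\delta\beta^{-1}M)$. Since the heavy lifting is entirely inside Proposition \ref{p:param_Bohr}, no substantive obstacle arises; the only mild care needed is the reflection trick to unify the cases $B \subseteq A$ and $B \subseteq -A$ when checking the sumset hypothesis.
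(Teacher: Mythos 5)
Your proposal is correct and follows essentially the approach the paper intends; the paper dismisses the proof with the single sentence ``Proposition~\ref{p:param_Bohr} immediately implies an analogue of Corollary~\ref{cor:M},'' and you fill in exactly the parameter dictionary $\kappa=1$, $T=2$, $\zeta=1/8$, $M'=2M$ that the proof of Corollary~\ref{cor:M} uses. The one place where you add genuine clarity over the paper is the reflection trick: the paper's proof of Corollary~\ref{cor:M} simply notes $|A-B|\le|A-A|$, which suffices in $\F_2^n$ since $A+B=A-B$, but in a general group one does need your explicit case split on $B\subseteq A$ versus $B\subseteq -A$ (replacing $A$ by $-A$ in the former case) to turn $|A+B|$ into a subset of $A-A$ — this is precisely why the Corollary's statement has the ``$B\subseteq A$ or $B\subseteq -A$'' clause, and you diagnosed it correctly.

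One small inaccuracy worth flagging: you transcribe the energy hypothesis as $\E(B)\le M'|B|^3/K$, but the condition in Proposition~\ref{p:param_Bohr} (equation~\eqref{cond:param_rho+E_Bohr}) reads $\E(B)\le M'|A|^3/K'$. Remark~\ref{r:M_M'} only bounds $\E(A)$, not $\E(B)$ for a proper subset $B\subsetneq A$; the chain that actually works is $\E(B)\le\E(A)\le 2M|A|^3/K$, which verifies the hypothesis as the paper states it (with $|A|^3$), not the stronger form you wrote (with $|B|^3$). Since you are in fact invoking Remark~\ref{r:M_M'} on $A$ or $-A$ and then using monotonicity, your underlying argument is sound; only the display of the condition is off. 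Relatedly, the Proposition's stated conclusion drops the factor $\omega$ that appears in the corresponding step of its proof and in the $\F_2^n$ version of Proposition~\ref{p:param}; your final inequality $|B\cap(\B_*+z)|\ge|\B_*|/(8M)$ inherits this, but that is a discrepancy already present in the paper's own statements and not a defect introduced by you.
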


Now we present a construction showing that the estimates in  Corollary \ref{cor:M_Bohr} are close to exact.


\begin{example}
    Let $p$ be a prime number, $d=4$ and $\Gr $ be the cyclic group 
    $\Gr = \F^*_{p^d}$. Put $A = \{ \mathrm{ind} (g+j)\} _{j=0,1,\dots,p-1}$, where $g$ is a fixed element that generates $\Gr$.
    Then by Katz's result (see \cite[Theorem 1]{Katz_char_est} and the proof of \cite[Lemma 1]{Andersson_Katz}) all non--zero Fourier coefficients of $A$ are bounded by $(d-1)\sqrt{p} = 3 \sqrt{|A|}$. 
    Clearly, $K=|A-A|/|A| \le |A|$ and hence 
\begin{equation}\label{tmp:M_exm}
    \M^2 (A) \le (d-1)^2 |A|\le  \frac{(d-1)^2 |A|^2}{K} \ll \frac{|A|^2}{K} \,.
\end{equation}
    In other words, the set $A$ has small Fourier coefficients. 
    Also, 
\[
    K^2 \d \le |A|^3 N^{-1} = o(1) 
\]
    and, therefore,  condition \eqref{cond:M_large_Bohr} is satisfied  for large $N$. 
    Now, if there exists a regular Bohr set $\B=\B(\Gamma,\eps)$ and $z\in \Gr$ such that  
    $|A\cap (\B+z)| \gg |\B|$, then $|\B| \ll |A| \ll N^{1/4}$
    (if one believes in GRH \cite{IK_book}, then  it is possible to obtain even better upper bounds for the cardinality of the intersection $A\cap (\B+z)$, see the argument of  \cite{Hanson_Bohr}). 
    But then estimate  \eqref{cor:M_Bohr_size} gives us 
    $$
    \dim (\B) \gg \frac{\log N}{\log \log N} \ge \frac{\log (\d^{-1}K)}{\log \log (\d^{-1}K)} \,,
    $$
    and this coincides with \eqref{cor:M_Bohr_dim} up to double logarithm.
\label{exm:Andersson}
\end{example}

Similarly, it is easy to prove an analogue of Corollary \ref{cor:2-eps} (and we leave the derivation of  the analogue of Corollary \ref{cor:density} to the interested reader).

\begin{corollary}
    Let $\Gr$ be a finite abelian group, $A \subseteq \Gr$ be a set, $|A|= \d N$, $|A-A|=K|A|$, and $\eps \in (0,1)$ be a parameter. 
    Suppose that 
\[
    100 K^2 \d \le \eps \,.
\]
    Then either there is $x\neq 0$ such that
\[
    |\FF{A} (x)|^2 \ge \frac{(2-\eps)|A|^2}{K} \,,
\]
    or there is a regular Bohr set $\B_* = \B(\G,\eps)$  and  $z\in \Gr$ such that 
    $\B_*\subseteq A-A$ and 
\begin{equation}\label{p:param_Bohr_B*_1'}
    \dim (\B_*) \ll \eps^{-2} \log (\d^{-1} K ) + \eps^{-3} \,,
\end{equation}
    as well as 
\begin{equation}\label{p:param_Bohr_B*_2'}
    |\B_*| 
    \gg 
    N \cdot 
    \exp (-O(\dim (\B_*) \cdot \log (\eps^{-1} \dim (\B_*) ))) \,. 
\end{equation}
\label{cor:2-eps_Bohr}
\end{corollary}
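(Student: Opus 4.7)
The argument will parallel Corollary \ref{cor:2-eps}, replacing Proposition \ref{p:param} by Proposition \ref{p:param_Bohr} and the subgroup $\mathcal{L}$ by a regular Bohr set. Assume the Fourier bound $|\FF{A}(x)|^2 < (2-\eps)|A|^2/K$ holds for all $x \neq 0$, as otherwise the first alternative of the statement is immediate. Apply Proposition \ref{p:param_Bohr} with $B = -A$, $\omega = 1$, $M = 2-\eps$, $M' \le M + \kappa$ (by Remark \ref{r:M_M'}, using $100 K^2 \d \le \eps$), $\kappa = \zeta = \eps/100$, and $T = 1+\kappa$. The hypothesis of the corollary also secures \eqref{cond:param_Bohr}. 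The proposition delivers a regular Bohr set $\B_*$ and $z \in \Gr$ with
\[
    |A \cap (\B_* + z)| \ge \frac{(1-2\zeta)|\B_*|}{T(M+\kappa)} \ge \Bigl(\frac{1}{2} + c_0\eps\Bigr)|\B_*|
\]
for an absolute constant $c_0 > 0$. Substituting the chosen parameters into \eqref{p:param_Bohr_B*_1} and \eqref{p:param_Bohr_B*_2} recovers \eqref{p:param_Bohr_B*_1'} and \eqref{p:param_Bohr_B*_2'}: the $\eps^{-3}$ term originates from $\log_T(O(1)) = O(1)/\log(1+\Theta(\eps)) = O(\eps^{-1})$ multiplied by the $O(\eps^{-2})$ prefactor, exactly as in Corollary \ref{cor:2-eps}.

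\textbf{From large intersection to containment.} Corollary \ref{cor:2-eps} used the fact that $\mathcal{L} + b = \mathcal{L}$ for $b \in \mathcal{L}$; here we must pay for the analogous statement by shrinking $\B_*$ and invoking regularity. Write $B_0 = A \cap (\B_* + z)$, $d = \dim(\B_*)$, and set $\rho = c_1\eps/d$ with $c_1 < c_0/50$. For any $b \in \B_{*,\rho}$, the inclusion $\B_{*, 1-\rho} \subseteq \B_* \cap (\B_* + b)$ combined with regularity of $\B_*$ gives $|\B_* \cap (\B_*+b)| \ge (1 - 100 d\rho)|\B_*|$. Since $B_0 + b \subseteq \B_* + z + b$, restricting $B_0 + b$ to $\B_* + z$ loses at most $|\B_*| - |\B_* \cap (\B_*+b)| \le 100 d\rho|\B_*|$ elements, hence
\[
    |B_0| + |(B_0 + b) \cap (\B_* + z)| \ge \bigl(2(1/2 + c_0\eps) - 100 d\rho\bigr)|\B_*| > |\B_*|.
\]
Both sets are subsets of $\B_* + z$, so they must intersect, yielding $b \in B_0 - B_0 \subseteq (-A) - (-A) = A - A$. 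Thus $\B_{*, \rho} \subseteq A - A$.

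\textbf{Regularization and size bound.} By Lemma \ref{l:Reg_B} applied to $\B_{*, \rho}$ there is $\rho' \in (\rho/2, \rho)$ such that $\B_{*, \rho'}$ is regular; this is the final Bohr set of the corollary, still contained in $A-A$. Its dimension equals $d$, preserving \eqref{p:param_Bohr_B*_1'}. Iterating Lemma \ref{l:entropy_Bohr} $O(\log(d/\eps))$ times produces $|\B_{*, \rho'}| \ge (\eps/d)^{O(d)}|\B_*|$, and combining with the lower bound on $|\B_*|$ from Proposition \ref{p:param_Bohr} delivers \eqref{p:param_Bohr_B*_2'}. The only delicate point is the constant balancing in the middle step: $\rho$ must be small enough ($\le c_0\eps/(50 d)$) to preserve the density surplus above $1/2$, yet large enough ($\sim \eps/d$) for the final size estimate to remain on the scale of \eqref{p:param_Bohr_B*_2'}; both constraints are comfortably met at $\rho = c_1\eps/d$.
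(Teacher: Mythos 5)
Your proposal is correct, and it diverges from the paper in the step that upgrades the large intersection to the containment $\B_* \subseteq A-A$. The paper follows Sanders' argument from \cite[Lemma 9.2]{sanders2012bogolyubov}: it goes \emph{inside} the proof of Proposition \ref{p:param_Bohr} to derive the correlation $(A*\mu)(z) \ge 1/2+\eps/8$ for an \emph{arbitrary} measure $\mu$ supported on $\B_*$, then uses Lemma \ref{l:entropy_Bohr} to find two consecutive dilates $\B'\subseteq\B''\subseteq\B_*$ of nearly equal size, takes $\mu$ uniform on $\B'\cup\B''$, and bounds $(A\circ A)(x)\ge |A\cap(\B'+z)|+|A\cap(\B''+z)|-|\B''|>0$ for $x\in(\B_*)_\eta$, $\eta\sim\eps/d$, using $\B'+x\subseteq\B''$. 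You instead treat Proposition \ref{p:param_Bohr} as a black box, take $B_0 = B\cap(\B_*+z)$ with density $>1/2+c_0\eps$, and exploit the regularity of $\B_*$ directly: for $b\in\B_{*,\rho}$ with $\rho\sim\eps/d$, the inclusion $\B_{*,1-\rho}\subseteq\B_*\cap(\B_*+b)$ plus the regularity bound $|\B_{*,1-\rho}|>(1-100d\rho)|\B_*|$ shows $B_0$ and $(B_0+b)\cap(\B_*+z)$ overflow $\B_*+z$ and hence intersect, giving $b\in B_0-B_0\subseteq A-A$. Both routes shrink the Bohr set by the same factor $\sim\eps/d$ and yield the same dimension and size bounds after re-regularizing with Lemma \ref{l:Reg_B} and accounting for the loss via Lemma \ref{l:entropy_Bohr}. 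What your route buys is modularity (no need to reopen the proposition's proof and establish the ``any measure'' variant); what the paper's route buys is avoiding any appeal to the regularity property of $\B_*$ itself, relying only on the pigeonhole covering Lemma \ref{l:entropy_Bohr} — a slightly more robust posture when one cannot afford to regularize first. Two minor bookkeeping points in your write-up: first, the regularity definition requires $d\rho\le 1/100$, which your choice $\rho=c_1\eps/d$ with $c_1<c_0/50$ satisfies since $c_0<1$; second, your $\kappa=\eps/100$ makes condition \eqref{cond:param_Bohr} hold with equality under $100K^2\d\le\eps$ (the paper's own $\kappa=\eps/200$ actually falls short here by a factor of $2$, so your choice is the right one), and one should double-check that $M+\kappa=2-\eps+\eps/100<2$ and $T=1+\eps/100$ still leave $(1-2\zeta)/(T(M+\kappa))>1/2+\Omega(\eps)$, which they do.
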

\begin{proof}
    We apply the argument of Proposition \ref{p:param_Bohr} with 
    $B=-A$, $\omega =1$, 
    $M=2-\eps$, $M' = \left( 1+ \frac{\kappa}{KM} \right) M \le M+\kappa$ (see Remark \ref{r:M_M'}), $T=1+\kappa$, and $\kappa = \zeta =\eps/200$, say. 
    Thus we find a Bohr set  $\B_* \subseteq \Gr$ and  $z\in \Gr$ such that 
\begin{equation}\label{tmp:24.11'}
     (A * \mu) (z) \ge \frac{(1-2\zeta)|\mathcal{L}|}{T(M+\kappa)}
    \ge  
    \left( \frac{1}{2} + \frac{\eps}{8} \right) \,,
\end{equation}
    where $\mu$ is any measure on $\B_*$.
    Also, we have 
\[
    d:=\dim (\B_*) \ll \zeta^{-2} T^2 (M+\kappa)^2
        \cdot \left( \log (\d^{-1} K) +  \log_T (M'(M+\kappa)) \cdot \log (M+\kappa) \right) 
\]
\[
    \ll 
    \eps^{-2} \log (\d^{-1} K ) + \eps^{-3} \,,
\]
    and thanks to Lemmas \ref{l:Bohr_est}, \ref{l:Bohr_intersection_Sanders}, one has  
\[
    |\B_*| \gg N \cdot (\zeta/(M_*|\Lambda|))^{O(|\Lambda|)} 
    \gg 
    N \cdot \exp (-O(\dim (\B_*) \cdot \log (\eps^{-1} \dim (\B_*) ))) \,.
\]
    Now we follow the argument of \cite[Lemma 9.2]{sanders2012bogolyubov}.
    Namely, put $t=\lceil 100 \eps^{-1} d \rceil$ and $\eta=1/2t$ and consider the sequence of Bohr sets  
\[
    (\B_*)_{1/2} \subseteq (\B_*)_{1/2+\eta} \subseteq  \dots \subseteq (\B_*)_{1/2+t\eta} = \B_* \,.
\]
    Applying Lemma \ref{l:entropy_Bohr}, we see that there is $j\in [t]$ such that 
\begin{equation}\label{tmp:B',B''}
    |\B''|:=|(\B_*)_{1/2+j\eta}| \le 8^{(d+1)/t} |(\B_*)_{1/2+(j-1)\eta}| \le (1+\eps/4) |(\B_*)_{1/2+(j-1)\eta}| := (1+\eps/4) |\B'|\,.
\end{equation}
    Consider the measure  $\mu (x) = \frac{\B'(x) + \B''(x)}{|\B'|+|\B''|}$, $\supp (\mu) \subseteq \B_*$. 
    Then inequality \eqref{tmp:24.11'} gives us 
\begin{equation}\label{tmp:B'+B''}
    |A\cap (\B'+z)| + |A\cap (\B''+z)| \ge \left( \frac{1}{2} + \frac{\eps}{8} \right) \cdot (|\B'| + |\B''|) \,.
\end{equation}
    One the other hand, for any $x\in (\B_*)_\eta$, we have 
\[
    (A \circ A) (x) \ge ( (A\cap (\B'+z)) \circ (A\cap (\B''+z)) ) (x)
\]
\[
    \ge 
    |A\cap (\B'+z)| + |A\cap (\B''+z)| - |(A\cap (\B'+z + x)) \bigcup (A\cap (\B''+z))|
\]
\[
    \ge 
    |A\cap (\B'+z)| + |A\cap (\B''+z)| - |A\cap (\B''+z)|
    \ge 
    |A\cap (\B'+z)| + |A\cap (\B''+z)| - |\B''| \,.
\]
    Using formulae \eqref{tmp:B',B''}, \eqref{tmp:B'+B''}, we get 
\[
    (A \circ A) (x) \ge \left( \frac{1}{2} + \frac{\eps}{8} \right) \cdot (|\B'| + |\B''|) - |\B''|
    = \left( \frac{1}{2} + \frac{\eps}{8} \right) |\B'| - \left( \frac{1}{2} -  \frac{\eps}{8} \right) \left( 1+ \frac{\eps}{4} \right) |\B'| 
\]
\begin{equation}\label{tmp:24.11''}
    \ge \frac{\eps |\B'|}{8} > 0 \,.
\end{equation}
    The inequality \eqref{tmp:24.11''} implies that $(\B_*)_\eta \subseteq A-A$. 
    We see that \eqref{p:param_Bohr_B*_1'}, \eqref{p:param_Bohr_B*_2'} take place 
    (use Lemmas \ref{l:Bohr_est}, \ref{l:Bohr_intersection_Sanders} again) 
    and thanks to Lemma  \ref{l:Reg_B} one can assume that we have deal with a  regular Bohr set.  
    This completes the proof. 
$\hfill\Box$
\end{proof}

\begin{problem}
    In Example \ref{exm:Andersson} we constructed a set $A \subseteq \Gr$, $|A| = \d |\Gr|$, $|A-A| = K|A|$ such that for $d>1$ one has 
\begin{equation}\label{f:pr_Kd}
    K^{d-1} \d \sim 1 \,,
\end{equation}
    and 
\begin{equation*}\label{f:pr_A}
    \M^2 (A) \le \frac{(d-1)^2 |A|^2}{K} \,,
\end{equation*}
    see Example \ref{tmp:M_exm}. 
    On the other hand, we always have a universal lower bound \eqref{f:kappa_0_intr}, provided $d \ge 4$. 
    Given $d>1$ and a set $A$ such that \eqref{f:pr_Kd} takes place, what are the proper upper/lower bounds for $\M (A)$? 
\end{problem}

\bibliographystyle{abbrv}

\bibliography{bibliography}{}
\noindent{I.D.~Shkredov\\
{\tt ilya.shkredov@gmail.com}

\end{document}


\section{Appendix}
\label{sec:appendix}

\begin{theorem}
    Let $\Gr$ be an abelian group, $A \subseteq \Gr$ be a set, $|A|= \d N$, $|D|=K|A|$, and $\eps \in (0,1)$ be a parameter. 
    Suppose that 
\[
    2 K^2 \d \le \eps \,.
\]
    Then  there is $x\neq 0$ such that
\[